\newtheorem{theorem}{Theorem} 
\newtheorem{lemma}[theorem]{Lemma}
\newtheorem{proposition}[theorem]{Proposition}
\newtheorem{corollary}{Corollary}
\theoremstyle{remark}
\newtheorem{remark}[theorem]{\it \bf{Remark}\/}
\def\section{\@startsection{section}{1}%
  \z@{1.5\linespacing\@plus\linespacing}{.5\linespacing}%
  {\normalfont\bfseries\large\centering}}
\newcommand{\be}{\begin{equation}}
\newcommand{\ee}{\end{equation}}
\newcommand{\bea}{\begin{eqnarray}}
\newcommand{\eea}{\end{eqnarray}}
\newcommand{\bee}{\begin{eqnarray*}}
\newcommand{\eee}{\end{eqnarray*}}
\def\pa{\partial}
\def\RR{\mathbb{R}}
\def\fref#1{{\rm (\ref{#1})}}
\def\supess{\mathop{\operator@font Sup\,ess}}
\def\e{\varepsilon}
\def\bar#1{{\overline #1}}
\def\fref#1{{\rm (\ref{#1})}}
\def\R2+{\RR ^2_+}
\def\lsl{\frac{\lambda_s}{\lambda}}
\def\pa{\partial}
\def\lim{\mathop{\rm lim}}
\def\sup{\mathop{\rm sup}}
\def\l{\lambda}
\def\log{{\rm log}}
\def\lsl{\frac{\lambda_s}{\lambda}}
\def\xsl{\frac{x_s}{\lambda}}
\def\matchal{\mathcal}
\def\pa{\partial}
\def\pa{\partial}
\def\R{\Bbb R}
\title[Singularity formation for critical problems]{Near soliton dynamics and singularity formation for $L^2$ critical problems}
\author[Y. Martel]{ {Yvan Martel}}
\address{Ecole Polytechnique, CMLS UMR7640}
\email{yvan.martel@polytechnique.edu}
\author[F. Merle]{Frank Merle}
\address{Universit\'e de Cergy Pontoise and Institut des Hautes \'Etudes Scientifiques, AGM CNRS UMR8088}
\email{merle@math.u-cergy.fr}
\author[P. Rapha\"el]{Pierre Rapha\"el}
\address{Universit\'e de Nice Sophia Antipolis and Institut Universitaire de France, LJAD CNRS UMR7361 }
\email{praphael@unice.fr}
\author[J. Szeftel]{J\'er\'emie Szeftel}
\address{CNRS and 
Université Pierre et Marie Curie , LJLL UMR7598}
\email{jeremie.szeftel@upmc.fr}
\begin{document}

\begin{abstract}
This survey reviews  the state of the art concerning the singularity formation for two canonical   dispersive problems: the mass critical non linear Schr\"odinger equation and the mass critical  generalized  KdV equation. In particular, we address the question of the classification of the flow for initial data  near the soliton.
\end{abstract}
\maketitle 


\section{Introduction}


The study of singularity formation in nonlinear dispersive equations has attracted considerable attention for the past thirty years. Recently, this activity  has led to the development of robust tools to construct and describe blow up solutions, solving some of the classical conjectures in the field. Description of blow up solutions typically  includes determination of   blow up speed, blow-up profile, behavior of concentration  points. 
We start by introducing below the two main equations to be considered in this review.

\subsection{The $L^2$ critical (NLS) and (gKdV) problems} Our aim in these notes is to review some recent progress done in the past ten years on singularity formation for two canonical problems: the $L^2$ critical non linear Schr\"odinger equation 
\be
\label{nls}
{\rm (NLS)} \qquad \ \left\{\begin{array}{ll} i\pa_t u +\Delta u +u|u|^{\frac 4d}=0\\ u_{|t=0}=u_0\end{array}\right . \qquad \ (t,x)\in \Bbb R\times \Bbb R^d, \quad u\in \Bbb C,
\ee
and the $L^2$ critical (one dimensional) generalized Korteweg--de Vries equation
\be
\label{kdv}
{\rm (gKdV)} \qquad \ \left\{\begin{array}{ll} \pa_t u +(u_{xx}+u^5)_x=0\\ 
u_{|t=0}=u_0\end{array}\right . \qquad \ (t,x)\in \Bbb R\times \Bbb R, \quad u\in \Bbb R.
\ee

A specific algebra underlies  these two models. Solutions of both models preserve the so--called energy $$E(u)=\frac12\int_{\Bbb R^d}|\nabla u|^2-\frac{1}{2+\frac 4d}\int_{\Bbb R^d}|u|^{2+\frac{4}{d}}=E(u_0)$$ and the mass $$\int_{\Bbb R^d}|u|^2=\int_{\Bbb R^d}|u_0|^2$$ 
(take $d=1$ in both expressions in the case of gKdV). The notion of $L^2$ criticality means that  the scaling symmetry of the equation:
$$\hbox{if $u(t,x)$ is solution then} \ u_{\l}(t,x):=\left\{\begin{array}{ll}\l^{\frac d2}u(\l^2t,\l x) \hbox{ for (NLS)}\\ \l^\frac{1}{2}u(\l^3 t,\l x)   \mbox{ for (gKdV)} \end{array}\right.\\ \hbox{is also solution,}$$
  leaves the $L^2$ norm unchanged.

\subsection{Local existence theory and   blow up}

From  Ginibre and Velo \cite{GV}, (\ref{nls}) is locally well-posed in $H^1(\R^d)$, and from  Kenig, Ponce and Vega \cite{KPV}, \eqref{kdv}  is locally well-posed in $H^1(\R)$. Thus, for any $u_0\in H^1$, there exists $0<T\leq +\infty$ and a unique maximal solution $u(t)\in {\mathcal{C}}([0,T),H^1)$ to either (\ref{nls}) or \eqref{kdv}, with the following alternative: 
\begin{itemize}
\item either $T=+\infty$, the solution is globally defined in $H^1$;

\item or $T<+\infty$ and then the solution blows up in finite time:
$$\lim_{t\uparrow T}\|\nabla u(t)\|_{L^2}=+\infty.$$
\end{itemize}

\subsection{Ground state and sharp threshold for global existence} Exceptional solutions   propagating without deformation (called traveling waves, or solitons) play a distinguished role in the analysis. Indeed, the ansatz 
\be
\label{groudnstate}
u(t,x)=\left\{\begin{array}{ll}Q(x)e^{it}\ \ \mbox{for  (NLS)}\\ Q(x-t) \ \ \mbox{for (gKdV)}\end{array}\right. 
\ee leads to the semi linear elliptic problem $$\Delta Q-Q+Q^{1+\frac{4}{d}}=0,\quad x\in \Bbb R^d,$$ which admits a {unique} up to space translation,  {positive}, $H^1$ solution known as the ground state solitary wave. Explicit variational characterization of $Q$ related to sharp Sobolev bounds led in the 80's to the first derivation of  a sharp criterion of global existence versus blow up for (NLS)  by Weinstein \cite{W1983} and Berestycki, Cazenave \cite{BeresCaze}. In particular, $H^1$ initial data with $$\|u_0\|_{L^2}<\|Q\|_{L^2}$$   for both (NLS) and (gKdV) generate a unique global solution $u\in \mathcal C([0,+\infty, H^1)$. In the case of (NLS), it is moreover known in this case that the solution scatters, {i.e.} behaves like a free wave as $t\to+\infty$: $$\exists u_{\pm\infty}\in H^1\ \ \mbox{such that}\ \ \lim_{t\to \pm\infty}\|u(t)-e^{it\Delta} u_{\pm\infty}\|_{L^2}=0.$$ We refer to \cite{Cbook} and to the  recent definitive result \cite{Dodson} (see also references therein). In other words, the ground state solitary wave is the {\it smallest non dispersive and thus non linear object}.\\

In many nonlinear problems, solitary waves are expected to be the building blocks for nonlinear dynamics: for large time any solution decomposes into a certain number  of   solitary waves plus a residual which is a free wave. 
This fact is well-known for integrable problems from the 60's, at least at the formal level.
In blow up regimes,  the role played by   solitary waves has also been clarified   in several directions: $Q$ is the universal blow up profile for blow up solutions initially close to $Q$, independently of the blow up speed. It is thus a fundamental object to study the  formation of singularity in such situations.
Moreover, the complete description of the  non linear flow near the ground state solitary wave has become one of the most relevant and challenging   question in the field.\\

Theses notes are organized as follows.
In Section 2, we summarize  results on blow up solutions near $Q$ obtained for (NLS) in \cite{MRgafa}, \cite{MRinvent}, \cite{MRannals}, \cite{MRjams},  \cite{MRS}, \cite{RS2010}. In Section 3, we discuss for (gKdV) the   description of the flow near the ground state obtained in \cite{MMR1}, \cite{MMR2}, \cite{MMR3}. 
Finally, an overview of some proofs for (gKdV) are given in Section \ref{sec:kdvproof}.


\section{The $L^2$ critical (NLS) problem}\label{sec:massnls}


In this section, we review  the state of the art on the problem of formation of singularities for the $L^2$ critical (NLS), and present some other results on related problems.


\subsection{Minimal mass blow up}\label{sec:mininalblowup}


As discussed in the Introduction, initial data $u_0\in H^1$ with  mass $\|u_0\|_{L^2}<\|Q\|_{L^2}$ generate global bounded solutions. For (NLS), this criterion is   sharp, as a   consequence of the so--called pseudo conformal transformation which is a well-known symmetry of the linear Schr\"odinger flow and of (NLS) in the $L^2$ critical case: if $u(t,x)$ is a solution to (NLS), then so is   
\be
\label{transfconf}
v(t,x)=\frac{1}{|t|^{\frac{d}{2}}}u\left(-\frac{1}{t},\frac{x}{t}\right)e^{i\frac{|x|^2}{4t}}.
\ee 
 Applied to the solitary wave solution $u(t,x)=Q(x)e^{it}$, one gets an {\it explicit minimal mass blow up solution}:
\be
\label{st}
S_{\rm NLS}(t,x)=\frac{1}{|t|^{\frac{d}{2}}}Q\left(\frac{x}{t}\right)e^{i\frac{|x|^2}{4t}-\frac{i}{t}},\qquad
\|S_{\rm NLS}(t)\|_{L^2} =\|Q\|_{L^2}.
\ee
The dynamics generated by the smooth data $S_{\rm NLS}(-1)$ is explicit: $S_{\rm NLS}(t)$ {scatters}  as $t\to -\infty$, and blows up as $t\uparrow 0$ at the speed 
\be
\label{sppedt}
\|\nabla S_{\rm NLS}(t)\|_{L^2}\sim \frac{1}{|t|}.
\ee
An essential feature of \eqref{st} is that $S_{\rm NLS}(t)$ is {\it compact} up to the symmetries of the flow, meaning that all the mass goes into the singularity formation
\be
\label{concn}
|S_{\rm NLS}(t)|^2\rightharpoonup \|Q\|_{L^2}^2\delta_{x=0}\ \ \mbox{as}\  \ t\uparrow 0.
\ee
The general intuition is that such a behavior is exceptional in the sense that such minimal elements can be classified\footnote{This is a dispersive intuition which for example is wrong in the parabolic setting, \cite{BCM}.}. 
The first result of this type was proved by Merle using the pseudo conformal symmetry.

\begin{theorem}[Classification of the minimal mass blow up solution,  \cite{Mduke}]
\label{critmass}
Let $u_0\in H^1$ with $$\|u_0\|_{L^2}=\|Q\|_{L^2}.$$ Assume that the corresponding solution to (NLS) blows up in finite time $0<T<+\infty$. Then $$u(t)=S_{\rm NLS}(t)$$ up to the symmetries of (NLS).
\end{theorem}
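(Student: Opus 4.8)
The plan is to exploit the variational characterization of $Q$ together with the conservation laws and the virial/pseudoconformal identity. The starting point is the sharp Gagliardo--Nirenberg inequality underlying Weinstein's threshold: for all $v\in H^1(\RR^d)$,
\be
\label{GN}
\int_{\RR^d}|v|^{2+\frac 4d}\leq \left(1+\tfrac 2d\right)\left(\frac{\|v\|_{L^2}}{\|Q\|_{L^2}}\right)^{\frac 4d}\int_{\RR^d}|\nabla v|^2,
\ee
with equality if and only if $v$ is, up to scaling, phase and translation, equal to $Q$. Since $\|u_0\|_{L^2}=\|Q\|_{L^2}$, this forces $E(u_0)\geq 0$, and in fact $E(u_0)=0$ is the only possibility: if $E(u_0)>0$, then by \eqref{GN} the quantity $\int|\nabla u(t)|^2$ would stay controlled by $E(u(t))=E(u_0)$ and the solution could not blow up. So the first step is to conclude $E(u_0)=0$.

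The second step is the variational rigidity statement. Applying the pseudoconformal transformation \eqref{transfconf} to $u$ (say at a time $T/2$, so that the transformed solution $v$ is defined on an interval up to $0$) produces a solution $v$ with $\|v\|_{L^2}=\|Q\|_{L^2}$ whose energy is, by the transformation law, $E(v)=\frac 18\int |x|^2|v_0|^2$ evaluated at the relevant time, or more precisely the pseudoconformal energy of $u$; the key computation is that blow-up of $u$ at time $T$ translates, after the conformal inversion sending $T\mapsto \infty$ (or $0$), into the fact that $v$ has finite variance and its virial functional $\int|x|^2|v(t,x)|^2\,dx$ behaves linearly and eventually vanishes. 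At the time where it vanishes, $v(t,\cdot)$ is supported in a way forcing, via the equality case of \eqref{GN} combined with $E=0$, that $v(t,\cdot)=Q$ up to symmetries. One then reads off from the explicit form of the symmetry group acting on $Q(x)e^{it}$ and inverts the pseudoconformal transformation to recover $u(t)=S_{\rm NLS}(t)$ up to the symmetries of (NLS).

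A cleaner way to organize the second step, which I would actually follow: introduce the variance $V(t)=\int|x|^2|u(t,x)|^2\,dx$ (finite for blow-up solutions, by a separate argument, since $u_0\in H^1$ with $E=0$ this can be justified using a localized virial and the conformal structure), and use the virial identity $V''(t)=16 E(u)=0$, so $V(t)=at+b$ is affine. Since $V(t)\geq 0$ and $\int|\nabla u|^2\to\infty$ as $t\uparrow T$, the uncertainty-principle inequality $\|u_0\|_{L^2}^2=\|u(t)\|_{L^2}^2\leq \frac 2d\, V(t)^{1/2}\|\nabla u(t)\|_{L^2}$ forces $V(t)\to 0$ as $t\uparrow T$, hence $b=-aT$ and $V(t)=a(t-T)$ with $a>0$; thus $V(T)=0$ in the limit. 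Rescaling $u(t)$ by $\lambda(t)=\|\nabla Q\|_{L^2}/\|\nabla u(t)\|_{L^2}\to 0$ and using that the rescaled family has mass $\|Q\|_{L^2}$, energy $0$, and variance tending to $0$, a compactness argument plus the equality case of \eqref{GN} shows the rescaled family converges to $Q$ up to phase and translation; propagating this rigidity backward in time through the equation (uniqueness of the Cauchy problem) and matching with the pseudoconformal orbit of $Q(x)e^{it}$ yields the conclusion.

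The main obstacle is the rigidity at the vanishing time of the variance: one must upgrade "variance small, mass critical, energy zero" into genuine convergence to $Q$, i.e. turn the equality case of the Gagliardo--Nirenberg inequality into a \emph{stability} statement along the flow, and then show this asymptotic profile forces the solution to be \emph{exactly} on the pseudoconformal orbit rather than merely asymptotic to it. This is where one genuinely uses the conformal symmetry: the transformed solution $v$ actually \emph{attains} equality in \eqref{GN} at the finite time where its variance is zero (not just asymptotically), which pins it down exactly. Justifying finite variance of $u_0$ (or working around its absence) is a secondary technical point that also needs care.
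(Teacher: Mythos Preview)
The paper does not give a proof of this theorem; it is stated with a citation to \cite{Mduke} and the single remark that the argument relies on the pseudoconformal symmetry. Your proposal is broadly in that spirit, but it contains two genuine gaps.

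First, the claim that $E(u_0)=0$ is incorrect. At critical mass the sharp Gagliardo--Nirenberg inequality yields only $E\geq 0$, with no coercive control of $\|\nabla u\|_{L^2}$ from above, so $E>0$ does not rule out blow-up. Concretely, $S_{\rm NLS}$ itself has strictly positive energy: at $t=-1$ one computes $|\nabla(Qe^{-i|x|^2/4})|^2=|\nabla Q|^2+\tfrac{|x|^2}{4}Q^2$, hence $E(S_{\rm NLS}(-1))=E(Q)+\tfrac18\int|x|^2Q^2=\tfrac18\int|x|^2Q^2>0$. Your virial step $V''=16E=0$ therefore cannot be invoked. Relatedly, the uncertainty inequality $\|u\|_{L^2}^2\leq \tfrac2d\,V(t)^{1/2}\|\nabla u(t)\|_{L^2}$ gives only the \emph{lower} bound $V(t)\gtrsim \|\nabla u(t)\|_{L^2}^{-2}\to 0$; it does not force $V(t)\to 0$, so your deduction of $V(T^-)=0$ is unjustified even if the virial were available.

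Second, finite variance is not a ``secondary technical point'' but one of the main difficulties: only $u_0\in H^1$ is assumed, and $xu_0\in L^2$ must be \emph{proved}. In Merle's argument this is obtained only after a first step in which concentration compactness and the variational characterization of $Q$ at minimal mass show that all of the $L^2$ mass of $u(t)$ concentrates at a single finite point as $t\uparrow T$. That concentration is what yields $u_0\in\Sigma$ a posteriori and makes the pseudoconformal transformation legitimate. After transforming, the new minimal-mass solution $v$ is shown to remain bounded in $H^1$ across the image of the blow-up time, and its limiting profile there is forced by the equality case of Gagliardo--Nirenberg to be $Q$ up to symmetries; unwinding the transformation gives $u=S_{\rm NLS}$ up to symmetries. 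The order of the steps is essential: concentration first, then finite variance, then rigidity via the pseudoconformal symmetry --- not the virial route you propose.
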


The question of   existence of minimal elements in various other settings has been a long standing open problem, mostly due to the fact that the existence of the minimal element for (NLS) relies entirely on the exceptional pseudo conformal symmetry. Merle in \cite{Merlenonexistence} considered the inhomogeneous problem 
\begin{equation}\label{nlsk:0}
i\pa_tu+\Delta u+k(x)u|u|^2=0, \ \ x\in \Bbb R^2 
\end{equation}
which breaks the full symmetry group, and obtains for non smooth $k$ {\it non existence} results of minimal elements.  A contrario and more recently, a sharp criterion on the inhomogeneity $k(x)$ for the existence and uniqueness of minimal solutions is derived  in  \cite{RS2010} (see also \cite{BCD} concerning   existence). 

\begin{theorem}[Existence and uniqueness of a critical element for \eqref{nlsk:0},  \cite{RS2010}]
\label{th:inhomogeneous}
Let $x_0\in \RR^2$ with $$k(x_0)=1 \ \ \mbox{and} \ \ \nabla^2k(x_0)<0.$$ 
Let the energy $E_0$ of $u$ satisfy:
\be\label{condblowup}
E_0+\frac{1}{8}\int \nabla ^2k(x_0)(y,y)Q^4>0.
\ee
Then, there exists a critical mass $H^1$ blow up solution to \eqref{nlsk:0}, unique up to phase shift,  which blows up at time $T=0$ and at the point $x_0$ with energy $E_0$. Moreover, 
\be
\label{mommentgotozero}
\lim_{t\to 0}Im\left(\int\nabla u\overline{u}\right)=0.
\ee
\end{theorem}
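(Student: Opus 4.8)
The plan is to obtain existence by a compactness argument built around a carefully chosen approximate blow-up profile, and uniqueness by a rigidity/energy argument for the difference of two such solutions; the latter is where I expect the real difficulty to lie.

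\textbf{Step 1: approximate blow-up profile and modulation equations.} Since at the minimal mass level no blow-up law other than the pseudo-conformal one is admissible, I would look for the solution in the form
\be
u(t,x)=\frac{1}{\lambda(t)}\big(Q_{b(t)}+\e(t)\big)\!\left(\frac{x-\alpha(t)}{\lambda(t)}\right)e^{i\gamma(t)},
\ee
where $Q_b=Q+b\,T_1+\cdots$ is the usual deformation of the ground state, designed so as to solve the approximate self-similar equation up to a small and well localized error $\Psi_b$, and $(\lambda,b,\alpha,\gamma)$ are modulation parameters fixed by requiring orthogonality of $\e$ to the generalized kernel of the linearized operator $L$ around $Q$. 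Plugging this ansatz into \eqref{nlsk:0} and Taylor expanding the inhomogeneity at the non-degenerate maximum, $k(\alpha+\lambda y)=1+\tfrac{\lambda^2}{2}\na^2 k(x_0)(y,y)+\cdots$, one obtains a dynamical system whose leading part reads
\bea
&&\frac{\lambda_t}{\lambda}+b=\text{(small)},\qquad \frac{\alpha_t}{\lambda}=\text{(small)},\qquad \gamma_t-1=\text{(small)},\nonumber\\
&& b_t+b^2=-c_0\Big(E_0+\tfrac{1}{8}\int\na^2 k(x_0)(y,y)Q^4\Big)\lambda^2+\text{(small)},\nonumber
\eea
for an explicit $c_0>0$. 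Condition \eqref{condblowup} is exactly what forces the right-hand side of the $b$-equation to have the sign needed for a solution with $b(t)>0$, $b(t)\sim|t|$, $\lambda(t)\sim|t|\to 0$ as $t\uparrow 0$, hence $\|\na u(t)\|_{L^2}\sim 1/|t|$; while $\na^2 k(x_0)<0$ makes $x_0$ an attractor for the center, so $\alpha(t)\to x_0$. The exact identity $\|u\|_{L^2}=\|Q\|_{L^2}$ together with mass conservation pins down the remaining relation and shows that the remainder carries no mass at leading order.

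\textbf{Step 2: uniform bootstrap and existence by compactness.} The analytic core is a bootstrap argument: assuming an a priori bound on $\|\e(t)\|_{H^1}$ with the appropriate power of $\lambda$, one proves a Lyapunov-type monotonicity for a localized energy/virial functional adapted to $L$, using the coercivity of $L$ under the orthogonality conditions together with control of $\Psi_b$ and of the perturbation $k-1=O(\lambda^2)$; this closes the a priori bounds. One then takes solutions $u_n$ of \eqref{nlsk:0} with data $u_n(t_n)=\lambda_n^{-1}Q_{b_n}((\cdot-\alpha_n)/\lambda_n)e^{i\gamma_n}$ at times $t_n\uparrow 0$ tuned so that the limiting law has energy $E_0$ and blow-up point $x_0$, runs them backward, and uses the uniform bounds to extract a limit $u_\infty\in\calC([t_0,0),H^1)$ still satisfying the same estimates. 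Hence $\|\na u_\infty(t)\|_{L^2}\sim 1/|t|\to\infty$: $u_\infty$ is a critical mass solution blowing up at $T=0$, at $x_0$, with energy $E_0$, and \eqref{mommentgotozero} follows from $\alpha_t/\lambda\to 0$ and $\lambda\to 0$, which force $\im\int\na u_\infty\,\overline{u_\infty}\to 0$.

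\textbf{Step 3: uniqueness.} Let $u_1,u_2$ be two critical mass solutions blowing up at $T=0$, at $x_0$, with energy $E_0$. The variational rigidity attached to the \emph{exact} equality $\|u_i\|_{L^2}=\|Q\|_{L^2}$ first yields, for each $i$, a decomposition as in Step~1 with $\e_i\to 0$ strongly in $H^1$ and with the \emph{same} sharp asymptotics for $(\lambda_i,b_i,\alpha_i,\gamma_i)$; the only remaining symmetry of \eqref{nlsk:0}, a phase shift, is used to synchronize $\gamma_1$ and $\gamma_2$. One then studies $w=u_1-u_2$, which solves a linearized-type equation with source quadratically small in the already small quantities. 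The hard part — and, I expect, the genuinely difficult part of the whole theorem — is to run a backward-in-time, self-improving energy estimate for $w$ near $t=0$: one must build a functional controlling a weighted, rescaled norm of $w$ which, thanks to the $L^2$-critical algebra and to the precision of the laws, obeys a differential inequality yielding $\|w(t)\|\lesssim|t|^N$ on $[t_0,0)$ for every $N$, hence $w(t_0)=0$ (for $|t_0|$ small), hence $w\equiv 0$ by uniqueness of the Cauchy problem for \eqref{nlsk:0}. The delicate point inside this scheme is the simultaneous control of the differences of the modulation parameters, which are only weakly separated and for which no coercivity is available beyond that provided by the orthogonality conditions; it is precisely here that the hypotheses $k(x_0)=1$, $\na^2 k(x_0)<0$, and \eqref{condblowup} must be used in full.
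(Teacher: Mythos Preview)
The present paper is a survey and does not give a proof of this statement beyond the one--line remark that it ``relies on a dynamical construction and new Lyapunov functionals at the minimal mass level,'' deferring entirely to \cite{RS2010}. Your outline is a faithful sketch of the strategy of \cite{RS2010} --- the modulated profile $Q_b$ with the Taylor expansion of $k$ at the non-degenerate maximum $x_0$, the role of \eqref{condblowup} in fixing the sign in the $b$--law, existence by backward compactness from well--prepared data at times $t_n\uparrow 0$, and uniqueness via a self--improving bootstrap on the difference of two minimal solutions --- so your proposal and the (cited) proof coincide in approach.
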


Under condition \eqref{condblowup},  minimal blow up elements at a non degenerate blow up point are thus completely classified.
It is  also shown in \cite{RS2010} that  \eqref{condblowup} is a necessary condition for blow-up.
 Theorem \ref{th:inhomogeneous} relies on a dynamical construction and new Lyapunov   functionals at the minimal mass level. A further extension to non local dispersion $$i\pa_tu-(-\Delta)^{\frac 12}u+u|u|^2=0, \ \ x\in \R$$  can be found in \cite{KLR2}, see also \cite{boulengerthesis} for an extension to curved backgrounds. These works show that the existence of a minimal mass bubble is a general property, independent of the exceptional existence of a pseudo conformal symmetry for the model.  


\subsection{log-log blow up}\label{sec:loglog}


The minimal mass blow up solution \fref{st} is explicit, but obviously the corresponding blow up scenario is unstable since any subcritical mass perturbation of $S_{\rm NLS}(t)$   leads to a globally defined solution. The question of the description of {\it stable} blow up bubbles has attracted a considerable attention which started in the 80's with the development of sharp numerical methods and the prediction of the ``log-log law'' for NLS by Landman, Papanicoalou, Sulem, Sulem \cite{PSS}.\\

We focus our attention to initial data with mass slightly above the minimal mass required for singularity formation 
\be
\label{smallamss}
u_0\in \mathcal B_{\alpha^*}=\left\{u_0\in H^1\ \ \mbox{with}\ \ \|Q\|_{L^2}<\|u_0\|_{L^2}<\|Q\|_{L^2}+\alpha^*\right\}, \ \ 0<\alpha^*\ll 1.
\ee 
Applying  concentration-compactness techniques \cite{PLL1} and  using the variational characterization of the ground state,  assumption  \eqref{smallamss} implies that if $u(t)$ blows up at $T<+\infty$, then for $t$ close   to $T$, the solution admits a nonlinear decomposition 
\be
\label{decompou}
u(t,x)=\frac{1}{\lambda(t)^{\frac{d}{2}}}(Q+\e)\left(t,\frac{x-x(t)}{\lambda(t)}\right)e^{i\gamma(t)},
\ee
where 
\be
\label{deflambda} \lambda(t)\sim \frac{1}{\|\nabla u(t)\|_{L^2}},\quad 
\|\e(t)\|_{H^1}\leq \delta(\alpha^*),\ \ \lim_{\alpha^*\to 0} \delta(\alpha_*)=0 .
\ee 
This decomposition implies that independently of the blow up regime, the ground state solitary wave $Q$ is  a good approximation of the blow up profile, which is the starting point of a perturbative analysis for \eqref{smallamss}. {\it The sharp description of the blow up bubble now relies on the determination  of a finite dimensional   dynamical system for a suitable choice of   geometrical parameters $(\lambda(t), x(t), \gamma(t))$, coupled to  the infinite dimensional   dynamics driving the   residual term $\e(t)$.}

\begin{remark} For example, one can decompose the  minimal mass blow up solution  \eqref{st}  as follows: $$\lambda(t)=|t|,\ \ \e(t,y)=Q(y)\left(e^{i\frac{b(t)|y|^2}{4}}-1\right), \ \ b(t)=|t|.$$
\end{remark}

All possible regimes for $(\lambda(t),x(t),\gamma(t))$ are not known so far for (NLS), but   progress has been made on the understanding of both  ``stable'' and ``threshold'' dynamics. The following theorem summarizes a series of results obtained in \cite{MRannals}, \cite{MRgafa}, \cite{MRinvent}, \cite{MRjams}, \cite{MRcmp}, \cite{Rannalen}:

\begin{theorem}[\cite{MRannals}, \cite{MRgafa}, \cite{MRinvent}, \cite{MRjams}, \cite{MRcmp}, \cite{Rannalen}]
\label{theoremdeux}
Let $d\leq 5$. There exists a universal constant $\alpha^*>0$ such that the following holds true. Let $u_0\in \mathcal B_{\alpha^*}$ and $u\in \mathcal C([0,T),H^1)$, $0<T\leq +\infty$ be the corresponding maximal solution to \eqref{nls}.\\
{\rm (i) Sharp $L^2$ concentration:} Assume $T<+\infty$. Then there exist parameters
 $(\lambda(t), x(t), \gamma(t))\in \mathcal C^1([0,T),\R_+^*\times\R^d\times \R)$ and an asymptotic profile $u^*\in L^2$ such that 
\be
\label{strignltwo}
u(t)-\frac{1}{\lambda(t)^{\frac{d}{2}}}Q\left(\frac{x-x(t)}{\lambda(t)}\right)e^{i\gamma(t)}\to u^* \ \ \mbox{in} \ \ L^2\ \ \mbox{as} \ \ t\to T.
\ee
Moreover, the blow up point is finite: $$x(t)\to x(T)\in \R^d \ \ \mbox{as} \ \ t\to T.$$ 
{\rm (ii) Blow up speed:} Under {\rm (i)}, the following alternative holds:

- either the solution satisfies the {\rm log-log regime}, i.e. 
\be
\label{logloglaw}
\lambda(t)\sqrt{\frac{\log|\log(T-t)|}{T-t}}\to \sqrt{2\pi} \ \ \mbox{as} \ \ t\to T
\ee

and then the asymptotic profile is not smooth:
\be
\label{ustarnotlp}
u^*\notin H^1 \ \ \mbox{and} \ \ u^*\notin L^p \ \ \mbox{for all} \ \ p>2;
\ee

- or there holds the sharp lower bound, for $t$ close to $T$,
 \be
 \label{lowerbound}
\lambda(t)\leq C(u_0)(T-t), \quad \hbox{equivalently} \quad \| \nabla u(t)\|_{L^2} \geq \frac {C_1(u_0)}{T-t}
 \ee
 
and then the asymptotic profile satisfies
\be
\label{ustarsmooth}
u^*\in H^1.
\ee
{\rm (iii) Sufficient condition for log-log blow up:} Assume $E_0\leq 0$. Then the solution blows un finite time $T<+\infty$ in the log-log regime \eqref{logloglaw}.\\
{\rm (iv)  $H^1$ stability of the log-log blow up:} The set of initial data in $\mathcal B_{\alpha^*}$ such that the corresponding solution to (\ref{nls}) blows up in finite time with the log-log law (\ref{logloglaw}) is open in $H^1$.\end{theorem}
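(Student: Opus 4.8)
The plan is to combine refined modulation theory with a family of monotonicity (Lyapunov) functionals tuned to the log-log regime, following the line of \cite{MRannals,MRgafa,MRinvent,MRjams,MRcmp,Rannalen}. First I would upgrade the rough decomposition \fref{decompou}--\fref{deflambda} to a smooth one: imposing a suitable set of orthogonality conditions on $\e(t)$ fixes $(\lambda(t),x(t),\gamma(t))$ as $\calC^1$ functions and produces modulation equations controlling $\lt/\lambda$, $\gamma_t$ and $x_t$ by the local size of $\e$. I would then pass to the rescaled time $s$ with $ds/dt=\lambda^{-2}$ and introduce a fifth parameter $b(t)\sim-\lambda\lt$ together with an almost explicit refined profile $\qb$ (a deformation of $Qe^{ib|y|^2/4}$, as in the Remark following \fref{decompou}), so that the error $\e$ solves a nearly elliptic equation with a small, well-understood source term.

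The analytic heart will be a coercivity estimate: the linearized Hamiltonian around $Q$, restricted to the subspace cut out by the orthogonality conditions, should control $\|\e\|_{H^1}^2$ modulo the finitely many symmetry and negative directions. This is exactly where $d\le 5$ is used --- classical for $d\le 2$, and for $3\le d\le 5$ relying on the numerically assisted spectral property. Feeding this, together with conservation of mass and energy and the smallness \fref{smallamss}, into a localized virial computation should yield both $\|\e\|_{H^1}\lesssim\delta(\alpha^*)$ and a differential inequality of the shape $b_s\gtrsim\delta_1\int|\na\e|^2+\Gamma_b$ with $\Gamma_b\sim e^{-\pi/b}$. Next I would construct a Lyapunov functional $\calJ$ --- a localized weighted combination of energy, momentum and the virial quantity --- with sign-definite $s$-derivative along the flow; this forces $b(t)>0$ in the relevant regime and, combined with $-\lambda_s/\lambda\sim b$ and $b_s\sim-e^{-\pi/b}$, leaves exactly two possibilities: either $b(t)\to 0$, whose integration gives precisely the log-log law \fref{logloglaw}, or $b(t)$ stays bounded away from $0$, giving $-\lambda_s/\lambda\gtrsim 1$ and hence $\lambda(t)\le C(T-t)$, i.e.\ \fref{lowerbound}. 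The monotonicity of $\calJ$ is what rules out oscillation between the two alternatives. For part (iii), the sign $E_0\le 0$ directly forces $b>0$ and traps the solution in the first alternative, so blow up occurs and is log-log.

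The regularity dichotomy \fref{ustarnotlp}--\fref{ustarsmooth} would then follow by quantifying, in each regime, the radiation rate of the excess mass: too slow in the log-log case for $u^*$ to belong to any $L^p$, $p>2$, while in the fast case the $H^1$ norm of the radiated part stays bounded. Part (i) I would obtain from a localized mass monotonicity (a Morawetz/virial quantity centered at $x(t)$) showing that $\int_{|x-x(t)|>R}|u|^2$ is almost decreasing, which gives both the $L^2$ limit $u^*$ of \fref{strignltwo} and the convergence $x(t)\to x(T)$. Finally, for (iv), the bounds defining the log-log alternative ($b>0$ small, $\|\e\|_{H^1}$ small) should carve out an open, forward-invariant region of $H^1$, so that any $H^1$-small perturbation of a log-log datum enters this trap and blows up in the log-log regime; openness is then immediate.

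The hard part will be the \emph{precision} of the monotonicity: the $\log|\log(T-t)|$ correction in \fref{logloglaw} is a genuine second-order effect, so $\calJ$ must be engineered so that every error term is strictly dominated by the $e^{-\pi/b}$ gain; any lossy estimate destroys the sharp law. The other delicate points are the spectral/coercivity property in dimensions $3\le d\le 5$ and the exclusion of oscillatory behaviour of $b(t)$ in the dichotomy of part (ii).
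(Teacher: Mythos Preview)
Your outline is correct and follows precisely the Merle--Rapha\"el program summarized in the cited works: refined modulation with a $b$-parameter and profile $Q_b$, the spectral/coercivity property (proved for $d=1$ and numerically verified for $d\le 5$ in \cite{FMR}, as the paper notes), the localized virial yielding $b_s$ controlled by $\Gamma_b\sim e^{-\pi/b}$, and the Lyapunov monotonicity that forces the log-log versus $1/(T-t)$ dichotomy. The paper itself is a survey and offers no proof beyond the ``Comments on the result'' discussion, so there is nothing further to compare.
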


\noindent {\bf Comments on the result.} 

{\it 1. The log-log law}. The stable blow log-log law \eqref{logloglaw}   was   proposed in the pioneering formal and numerical work \cite{PSS}. The first rigorous construction of  a  class of solutions with such blow up speed is due to G. Perelman \cite{PE} in dimension $d=1$. The proof of Theorem \ref{theoremdeux} involves a mild coercivity property of the linearized operator close to $Q$, which is proved in dimension $d=1$ in \cite{MRannals} and checked numerically in an elementary way in \cite{FMR} for $d\leq 5$. For   dimensions $d\geq 2$, the lack of explicit formula for the ground state is a difficulty to prove this property.\\

{\it 2. Upper bound on the blow up speed}: No general upper bound on the blow up speed $\|\nabla u(t)\|_{L^2}$ is known in the $L^2$ critical case, even for data $u_0\in \mathcal B_{\alpha^*}$. This is in contrast with super critical regimes where recently a sharp upper bound has been derived (see \cite{MRSring} and section~\ref{sec:ringblowup}). The lower bound \eqref{lowerbound} is sharp and attained by the minimal blow up element $S_{\rm NLS}(t)$. The derivation of different blow up speeds, which is equivalent  to the construction of infinite time grow up solutions through the pseudo conformal symmetry, is related to the description of the flow near the ground state, still incomplete for (NLS). Some intuition may come from the recent classification results obtained for the $L^2$ critical gKdV problem and presented in section \ref{sectionkdv}.\\

{\it 3. Quantization of the blow up mass}: The strong convergence \eqref{strignltwo}  describes precisely the blow up bubble in the scaling invariant space and implies in particular that the mass  put into the singularity is quantized $$|u(t)|^2\rightharpoonup \|Q\|_{L^2}^2\delta_{x=x(T)}+|u^*|^2\ \ \mbox{as} \ \ t\to T, \ \ |u^*|^2\in L^1.$$ Such quantization and the convergence result \eqref{strignltwo} rely on   the property of asymptotic stability  of the solitary wave in blow up regime, which writes as follows  in the formulation \eqref{decompou} $$\e(t,x)\to 0 \ \ \mbox{as}\ \ t\to T\ \ \mbox{in}\ \ L^2_{\rm loc}.$$ In fact, in proving  Theorem \ref{theoremdeux}, the derivation of either upper bounds or lower bounds on the blow up rate is closely related to the question of dispersion for the residual $\e(t,x).$ 
Such asymptotic  theorems started to appear in the dispersive setting in \cite{MMannals}, and significant progress was  made in a recent classification result - without any assumption of size on the data - for energy critical wave equations \cite{KDM1}. \\

{\it 4. Asymptotic profile}: The regularity of the asymptotic profile $u^*$ depends directly upon the regime because    singular and regular parts of the solution are very much coupled in the stable log-log regime, while they are   weakly interacting in any other regime.


\subsection{Threshold dynamics}
\label{bw}


Theorem \ref{theoremdeux} describes the stable log-log blow up in the neighborhood of the soliton, but  it does not complete the study of blow up for (NLS) even for initial data $u_0\in \mathcal B_{\alpha^*}$. In particular, it remains to  clarify   unstable blow up close to $Q$. The explicit minimal mass blow up solution $S_{\rm NLS}(t)$ defined in \eqref{theoremdeux}   is clearly unstable but Bourgain and Wang \cite{BW} observed that specific perturbations of the initial data   preserve blow up with speed $1/t$. The excess of mass in this regime converts into a {\it flat and smooth} asymptotic profile at the blow up time, which does not alter the blow up law.

\begin{theorem}[Bourgain-Wang solutions \cite{BW}]\label{BW}
Let $d=1,2$. Let $u^*$ be such that
\begin{equation}\label{smoothnessa}
u^*\in X_A=\{f\in H^A \hbox{ with } (1+|x|^A) f \in L^2\},
\end{equation}
and
\begin{equation}\label{flatnessa}
D^{\alpha} u^*(0)=0,\hbox{ for } 1\leq |\alpha| \leq A,\end{equation}
for some $A$ large enough.
 Then, there exists a solution $u_{BW}\in \mathcal C((-\infty,0),H^1)$ to (\ref{nls}) which blows up at $t=0$, $x=0$ and satisfies:
\be
\label{blowupbourgaion}
u_{BW}(t)-S_{\rm NLS}(t)\to u^* \ \ \mbox{in} \ \ H^1 \ \ \mbox{as} \ \ t\uparrow 0.
\ee
\end{theorem}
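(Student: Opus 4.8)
The plan is to construct $u_{BW}$ backwards from the singular time $t=0$ by a perturbative argument built on top of the explicit minimal mass solution $S_{\rm NLS}(t)$, treating the prescribed smooth profile $u^*$ as a source term that is essentially inert near the blow up point. First I would look for $u_{BW}$ in the form $u_{BW}(t,x) = S_{\rm NLS}(t,x) + \tilde v(t,x)$, where $\tilde v$ should converge to $u^*$ in $H^1$ as $t\uparrow 0$. Substituting into \eqref{nls} gives an evolution equation for $\tilde v$ whose forcing terms are the nonlinear interactions between $S_{\rm NLS}$ and $\tilde v$; the key heuristic is that $S_{\rm NLS}(t)$ concentrates at scale $|t|$ near $x=0$ with $L^2$ mass $\|Q\|_{L^2}^2$ going into $\delta_{x=0}$, while $u^*$ is, by \eqref{flatnessa}, flat to high order at $x=0$, so the overlap of the two bubbles is integrable and small in a suitable sense as $t\to 0$. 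It is natural to further split $\tilde v = w + z$, where $w$ solves the linear Schr\"odinger equation $i\partial_t w + \Delta w = 0$ with data $w(0) = u^*$ (so $w$ is smooth and spread out, and the flatness \eqref{flatnessa} plus regularity \eqref{smoothnessa} give good control of $w$ and its derivatives near $x=0$ for $t$ close to $0$), and $z$ is the genuinely nonlinear remainder, expected to be small.

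The core of the argument is then a fixed point / energy estimate for $z$ on a time interval $(-T_0,0)$ with $T_0$ small, in a norm that is strong enough to pass to the limit $t\uparrow 0$ but weak enough to absorb the singular behavior of $S_{\rm NLS}$. The natural space measures $z$ in $H^1$ with a weight that degenerates like a power of $|t|$ near $t=0$, reflecting the scaling $\|\nabla S_{\rm NLS}(t)\|_{L^2}\sim 1/|t|$ from \eqref{sppedt}. One writes the Duhamel formula for $z$ from $t=0$ backwards: $z(t) = -i\int_0^t e^{i(t-s)\Delta}F(s)\,ds$ where $F$ collects all the mixed terms $S_{\rm NLS}^a w^b z^c$ with $a+b+c=5$ (in $d=1$) or the quadratic/cubic analogues in $d=2$. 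Each term must be estimated using Strichartz inequalities together with the explicit pointwise structure of $S_{\rm NLS}$: the purely $S_{\rm NLS}$-quintic term is exactly cancelled since $S_{\rm NLS}$ solves the equation, the terms with at least one $z$ factor are controlled by the smallness of $z$ and a Gronwall argument, and the crucial terms with one $S_{\rm NLS}$ and the rest $w$ are the ones where the flatness of $u^*$ is used: near $x=0$ the factor $w$ vanishes to high order and cancels the concentration of $S_{\rm NLS}$, while away from $x=0$ the factor $S_{\rm NLS}$ is exponentially small, so these source terms are integrable in time up to $t=0$. A second (or the same) fixed point then shows $z(t)\to 0$ in $H^1$, which combined with $w(t)\to u^*$ yields \eqref{blowupbourgaion}, and the blow up at $(t,x)=(0,0)$ is inherited from $S_{\rm NLS}$ since $z$ and $w$ stay bounded in $H^1$.

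The main obstacle I expect is the delicate nature of the coupling between the singular bubble and the regular profile: even with the flatness assumption \eqref{flatnessa}, controlling the interaction terms requires choosing the right functional framework (probably a combination of weighted $H^1$ and $H^A$ norms, with weights adapted to the self-similar scale $|t|$), and one must be careful that the loss of derivatives in the nonlinearity $u|u|^4$ (for $d=1$) does not defeat the scheme — this is why $A$ must be taken large in \eqref{smoothnessa}. A related subtlety is that the linear flow $e^{it\Delta}$ does not preserve the flatness \eqref{flatnessa} exactly, only approximately for short times, so one needs quantitative dispersive estimates showing that $w(t)$ retains enough flatness near $x=0$ on $(-T_0,0)$; alternatively one incorporates the relevant Taylor corrections of $w$ into the profile ansatz. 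Finally, ensuring that the backward construction actually produces a solution defined on all of $(-\infty,0)$ (rather than just near $t=0$) uses the subcritical local theory away from the blow up time together with the scattering of $S_{\rm NLS}$ as $t\to-\infty$; this last step is comparatively soft.
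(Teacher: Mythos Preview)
The paper does not contain a proof of this theorem: it is a survey, and immediately after the statement it simply refers the reader to \cite{BW} for a more precise statement and to \cite{KSNLS} for further discussion. There is therefore no ``paper's own proof'' to compare your proposal against.

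That said, your sketch is a faithful outline of the Bourgain--Wang construction. The decomposition $u_{BW}=S_{\rm NLS}+w+z$, with $w$ the (linear or nonlinear) evolution of $u^*$ and $z$ a small remainder obtained by a fixed point backward from $t=0$, together with the observation that the flatness \eqref{flatnessa} decouples the concentrating bubble from the smooth background, is exactly the mechanism in \cite{BW}. One technical remark: in the original paper the pseudo-conformal symmetry \eqref{transfconf} is used to convert the backward-from-blow-up problem into a forward scattering problem around the soliton $Q e^{it}$, which makes the functional framework cleaner (the weight in $|t|$ you anticipate becomes spatial decay after the transformation, and the flatness of $u^*$ becomes decay of the transformed data). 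Your direct approach without this change of variables is also viable and is closer in spirit to the later treatment in \cite{KSNLS}, but it does make the weighted estimates you flag as ``the main obstacle'' genuinely harder to close; the pseudo-conformal shortcut is worth knowing.
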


We refer to \cite{BW} for a more precise statement and  to \cite{KSNLS} for a further discussion on the   construction of a manifold of Bourgain-Wang solutions. Note that the Bourgain-Wang blow up  solutions   saturate the lower bound \eqref{lowerbound}: $$\|\nabla u(t)\|_{L^2}\sim \frac{1}{T-t}.$$

Recall that by Strichartz estimates and   $L^2$ critical Cauchy theory \cite{CW},   solutions which scatter are $L^2$ stable. Also,  from Theorem \ref{theoremdeux},  solutions in $\mathcal{B}_{\alpha^*}$ that blow up in finite time {in the log-log regime} form an open set in $H^1$. Bourgain-Wang solutions correspond in a certain sense to a threshold unstable dynamic between these two stable dynamics, as  was proved in \cite{MRS}.

\begin{theorem}[Instability of Bourgain-Wang solutions, \cite{MRS}]
\label{theoremmainbw}
Let $d=2$. Let $u^*$ satisfy \eqref{smoothnessa} and \eqref{flatnessa} and let $u_{BW}\in \mathcal C((-\infty,0),H^1)$ be the corresponding Bourgain-Wang solution. Then there exists a continuous map $$\eta\in [-1,1]\to u^\eta(-1)\in \Sigma=\{f\in H^1(\R^d) \hbox{ with } xf\in L^2(\R^d)\}$$ such that, $u^{\eta}(t)$ being the solution of \eqref{nls} with initial data   $u^{\eta}(-1)$ at $t=-1$, 
\begin{itemize}
\item   $u^{\eta=0}(t)\equiv u_{BW}(t)$;
\item $\forall \eta\in(0,1]$, $u^{\eta}\in \mathcal C(\Bbb R,\Sigma)$ is global in time and scatters;
\item $\forall \eta\in [-1,0)$, $u^{\eta}\in \mathcal C((-\infty,T^{\eta}),\Sigma)$ blows up   in the log-log regime at $-1<T^{\eta}<0$.
\end{itemize}
\end{theorem}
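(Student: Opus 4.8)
The plan is to exploit that $u_{BW}$ sits exactly at the threshold between the open set of scattering solutions and the set of log-log blow up solutions (which is open in $H^1$ by Theorem~\ref{theoremdeux}(iv)), and that this threshold is crossed transversally by kicking the minimal-mass bubble carried by $u_{BW}$ in its unique unstable direction. Concretely, we would realise the family $(u^\eta)_{\eta\in[-1,1]}$ as the backward-in-time flow of such a one-parameter kick applied to $u_{BW}$ near its blow up time. First, fixing $t_0<0$ close to $0$ (the closeness depending on $u^*$), we would use the modulation machinery behind Theorems~\ref{critmass} and~\ref{th:inhomogeneous} to decompose, for $t\in[t_0,0)$,
$$u_{BW}(t,x)=\frac{1}{\lambda(t)^{d/2}}\big(Q_{b(t)}+\tilde\e\big)\!\left(t,\frac{x}{\lambda(t)}\right)e^{i\gamma(t)}+\tilde u(t,x),$$
where $Q_b$ is the standard approximate self-similar profile, $\tilde u(t)\to u^*$ in $H^1$, and with the sharp minimal-mass asymptotics $\lambda(t)\sim|t|$, $b(t)\sim|t|$ and $\|\tilde\e(t)\|_{H^1}\to0$ as $t\uparrow0$; crucially, in self-similar time $s=s(t)$ the rescaled flow near $Q$ runs for an infinite amount of time on $[t_0,0)$. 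We also record that $u_{BW}(t)\in\Sigma$ for $t<0$.

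Next we would introduce the kick: let $\Psi$ be a fixed Schwartz profile lying in the unstable direction of the linearised minimal-mass flow near $Q$ (equivalently, a direction along which the bubble's linearised energy changes sign), and set
$$u^\eta(t_0):=u_{BW}(t_0)+\eta\,\delta_0\,\lambda(t_0)^{-d/2}\Psi\!\left(\tfrac{x}{\lambda(t_0)}\right)e^{i\gamma(t_0)}\in\Sigma,$$
with $\delta_0>0$ small and fixed. Because $u^\eta(t_0)$ is affine, hence continuous, in $\eta$ in $\Sigma$, and the flow of \eqref{nls} is continuous in $\Sigma$ on the \emph{fixed} interval $[-1,t_0]$ (on which $u_{BW}$, hence each nearby $u^\eta$, is smooth), the datum $u^\eta(-1)$ is well defined, lies in $\Sigma$, depends continuously on $\eta$, and equals $u_{BW}(-1)$ at $\eta=0$. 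Since $u^\eta(-1)$ is close to $u_{BW}(-1)$ (closeness controlled by $\delta_0$) and $u_{BW}$ is defined and scatters on $(-\infty,-1]$, stability of scattering shows that every $u^\eta$ is defined and scatters on $(-\infty,-1]$. This already produces the continuous path with $u^{\eta=0}\equiv u_{BW}$; what remains is the forward dynamics from $t_0$.

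For that we would run the decomposition above for $u^\eta$ forward from $t_0$. Although the kick is only $O(\delta_0\eta)$ in the bubble scale, the rescaled flow runs for infinite self-similar time, so the kick drives, to a definite sign proportional to $\eta$, the minimal-mass excess functional $\mathcal J$: the localised virial/energy quantity, built from Lyapunov functionals in the spirit of \cite{RS2010}, which vanishes identically along $S_{\rm NLS}$ and whose sign governs the blow up alternative. Then, by the sharp near-$Q$ energy and virial estimates:
\begin{itemize}
\item for $\eta<0$: $\mathcal J$ becomes positive, $u^\eta$ leaves the self-similar trajectory at some time $t_\eta\in(-1,0)$ (with $|t_\eta|\to0$ as $\eta\uparrow0$), enters the log-log regime, and hence, the log-log set being open by Theorem~\ref{theoremdeux}(iv), blows up in finite time $T^\eta\in(t_\eta,0)\subset(-1,0)$ with the law \eqref{logloglaw};
\item for $\eta>0$: $\mathcal J$ becomes negative, the bubble de-concentrates ($b$ changes sign and $\lambda$ grows back to size $O(1)$), $u^\eta$ is global forward, and the de-concentrated configuration disperses, so that $u^\eta\in\mathcal C(\R,\Sigma)$ scatters.
\end{itemize}
Choosing $\delta_0$ small enough keeps $T^\eta>-1$ for every $\eta\in[-1,0)$ and rules out pathologies up to $|\eta|=1$; together with the previous paragraph this would give the theorem.

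The hard part will be this forward step, and within it two points. First, the closed-form-free analysis of the linearised minimal-mass flow near $Q$ in dimension $d=2$, namely the construction of $\mathcal J$, the proof of its monotonicity, and the transversality of the kick: this is precisely the near-soliton-dynamics and Lyapunov-functional technology, made more delicate in $d=2$ by the absence of an explicit formula for $Q$. Second, upgrading the global-existence statement of the de-concentration regime to genuine scattering when $\eta>0$: this does \emph{not} follow from sub-threshold scattering theory, since $\|u^\eta\|_{L^2}^2=\|Q\|_{L^2}^2+\|u^*\|_{L^2}^2>\|Q\|_{L^2}^2$, and would instead require a direct dispersive analysis of the de-concentrated solution, most naturally carried out through the pseudoconformal transform \eqref{transfconf}, under which $u_{BW}$ becomes a solution asymptotic to the soliton and the $\eta>0$ kick moves it onto the dispersive side of the soliton manifold.
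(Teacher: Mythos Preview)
The paper you are working from is a survey: Theorem~\ref{theoremmainbw} is quoted from \cite{MRS} and is \emph{not} proved here, so there is no ``paper's own proof'' to compare against. What I can say is that your outline is broadly aligned with the strategy of \cite{MRS}: one perturbs the Bourgain--Wang solution along a single well-chosen direction transverse to the minimal-mass manifold, and then shows that the sign of the perturbation selects between the log-log open set and the scattering open set. Your identification of the two hard points --- the construction and monotonicity of a Lyapunov functional distinguishing the two sides, and the upgrade from global existence to scattering in the defocusing-kick case --- is accurate, and your suggestion to pass through the pseudoconformal transform \eqref{transfconf} for the latter is in fact the route taken in \cite{MRS}: under \eqref{transfconf} the Bourgain--Wang dynamics near $t=0^-$ becomes a large-time soliton-plus-radiation problem, and the $\eta$-kick becomes a perturbation off the soliton manifold whose long-time dispersion can be read off directly.

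Two cautions on details of your sketch. First, the sentence ``$u_{BW}$ is defined and scatters on $(-\infty,-1]$, [so] stability of scattering shows that every $u^\eta$ is defined and scatters on $(-\infty,-1]$'' is not something you can take for granted: backward scattering of $u_{BW}$ is not asserted in Theorem~\ref{BW}, and in any case the theorem only requires $u^\eta\in\mathcal C((-\infty,T^\eta),\Sigma)$, which follows already from local well-posedness in $\Sigma$ and continuous dependence on a fixed compact interval. Second, your phrase ``the unstable direction of the linearised minimal-mass flow near $Q$'' deserves care: the linearised operator around $Q$ for critical NLS has no genuine exponentially unstable eigenvalue; the relevant ``instability'' is the secular (polynomial) growth tied to the generalized kernel $(\Lambda Q, i|y|^2Q,\dots)$ and to the parameter $b$, and the kick in \cite{MRS} is designed accordingly rather than along a spectral eigenfunction. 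These are refinements rather than fatal gaps, but they are exactly where the work in \cite{MRS} lives.
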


Note that this theorem only describes the flow near the Bourgain-Wang solution along {\it one} instability direction. A main open problem for (NLS)  is to completely describe the flow near the ground state $Q$. Theorem \ref{theoremmainbw} is a first step towards the description of the flow near   Bourgain-Wang solutions, which is an interesting   open problem (see section~\ref{sec:openproblem} and \cite{KSNLS}).


\subsection{Structural instability of the log-log law}


The Zakharov system in space dimensions $d=2,3$, is
a model related to (NLS) with fundamental physical relevance \cite{SS}:
\be
\label{zakharov}{\rm (Zakharov)} \qquad 
\left \{ \begin{array}{ll}
         iu_t=-\Delta u+nu,\\
         \frac{1}{c_0^2}n_{tt}=\Delta n +\Delta |u|^2,
         \end{array}\qquad \qquad
\right .
\ee
for some fixed constant $0<c_0<+\infty$. In the limit $c_0\to +\infty$, one formally recovers (NLS). In dimension $d=2$, this system displays a variational structure like the one of (NLS), even though the scaling symmetry is destroyed by the wave coupling. By bifurcation from $S_{\rm NLS}(t)$, the exact  solution of (NLS), Glangetas, Merle \cite{GM} constructed a one parameter family of blow up solutions with blow up speed: $$\|\nabla u(t)\|_{L^2}\sim \frac{C(u_0)}{T-t}.$$ 
Numerical experiments (Papanicolaou, Sulem, Sulem, Wang, \cite{PSSW}) suggest that such solutions are stable. Recall also that from Merle \cite{MZakh}, {\it all finite time blow up solutions to (\ref{zakharov}) satisfy the lower bound} $$\|\nabla u(t)\|_{L^2}\geq \frac{C(u_0)}{T-t}.$$ In  particular, there are no log-log blow up solutions for (\ref{zakharov}), which means that in some sense    Zakharov blow up dynamic should be   more stable    than its asymptotic limit (NLS). This is one more hint that the stable log-log law for (NLS)  deeply relies  on some specific algebraic structure of (NLS), as suggested by the use of non linear degeneracy properties in the blow up analysis   of Theorem \ref{theoremdeux}. We emphasize that refined study of the singularity formation for the Zakharov system is mostly open, though it can be considered as the first step towards the understanding of   relevant and more complicated systems related to Maxwell  equations.


\subsection{Blow up for the  $L^2$ supercritical (NLS) model}\label{sec:l2superctritical}


Consider   the $L^2$ supercritical (NLS) model
\be
\label{nlsl2super}
{\rm (NLS)} \ \ \left\{\begin{array}{ll} i\pa_t u +\Delta u +u|u|^{p-1}=0\\ u_{|t=0}=u_0\end{array}\right . \ \ (t,x)\in \Bbb R\times \Bbb R^d,
\ee
with the choice
$$p>1+\frac{4}{d}.$$
The scaling symmetry for \eqref{nlsl2super} is given by
$$u_{\l}(t,x)=\l^{\frac{2}{p-1}}u(\l^2t,\l x),$$
and the homogeneous Sobolev space invariant under this symmetry is $\dot{H}^{s_c}(\mathbb{R}^d)$ where 
$$s_c=\frac{d}{2}-\frac{2}{p-1},$$
and $s_c>0$ corresponds to the $L^2$ supercritical case. The problem is said energy subcritical if $s_c<1$, energy critical if $s_c=1$, and energy supercritical if $s_c>1$.
We  now briefly describe   three results for the $L^2$ supercritical NLS obtained by extending  some techniques developed for the results reviewed above, thus illustrating their robustness. 

\subsubsection{Standing ring solutions} 

We consider in this section the quintic NLS, i.e $p=5$ in \eqref{nlsl2super}, which is $L^2$ critical in dimension $d=1$ and $L^2$ supercritical in dimensions $d\geq 2$. Radially symmetric solutions blowing up on a (nontrivial) sphere  are constructed in \cite{R2} for dimension $d=2$, and in \cite{RScmp} for the  dimensions $d\geq 3$. Note that this covers energy subcritical ($d=2$), energy critical ($d=3$) and energy supercritical ($d\geq 4$) cases. Heuristically, in radial variables, the problem becomes: $$i\partial_t u+\partial_r^2u+\frac{N-1}{r}\partial_ru+u|u|^4=0;$$  one   expects that if the singularity formation occurs on the unit sphere, then close to the singularity $$\left|\frac{\partial_ru}{r}\right|\sim |\partial_r u|<<\left|\partial_r^2u\right|,$$ so that the leading order blow up dynamics should be given by the one dimensional quintic NLS. Therefore,   solutions blowing up on a sphere in any dimension $d\geq 2$ can be constructed  by perturbation of   the log-log dynamic for the $L^2$ critical one dimensional (NLS).

See further extensions  in \cite{HR1}, \cite{HR2}, \cite{Z}, where axially symmetric blow up  solutions are constructed     for the cubic NLS.
\subsubsection{Self similar solutions}

In the $L^2$ supercritical case, the so-called self similar regime,  i.e.  blow up of the type
\begin{equation}\label{taux}
\|\nabla u(t)\|_{L^2}\sim \frac{1}{(T-t)^{\frac{1}{p-1}+\frac{1}{2}-\frac{d}{4}}}\quad \textrm{as }t\sim T,
\end{equation}
is  conjectured to be the stable blow up regime, in particular  in view of   numerical computations (see for example \cite{SS}). In \cite{MRSgafa},  this conjecture is actually proved in the ``slightly'' $L^2$ supercritical case. More precisely,  considering the focusing nonlinear Schr\"odinger equations \eqref{nlsl2super} with $p>1+\frac{4}{d}$  sufficiently close to $1+\frac{4}{d}$,     existence and stability in $H^1$ of  self similar finite time blow up dynamics is proved together with a qualitative description of the singularity formation near the blow up time. The analysis is perturbative of the log-log analysis of the $L^2$ critical case reviewed in section \ref{sec:loglog}. 

\subsubsection{Collapsing ring solutions}\label{sec:ringblowup}
For (NLS) equation \eqref{nlsl2super} with $d\geq 2$ and $p$ in the range 
$$1+\frac 4d<p<\min\left(\frac{d+2}{d-2},5\right),$$
it is proved in  \cite{MRSring}  that
any radially symmetric solution $u(t)$ blowing up at time $T$ satisfies the following universal upper bound
\be
\label{esiheoeogh}
\int_t^T(T-\tau)\|\nabla u(\tau)\|_{L^2}^2d\tau\leq C(u_0)(T-t)^{\frac{2\alpha}{1+\alpha}},
\ee
where $\alpha$ is given by
$$\alpha=\frac{5-p}{(p-1)(N-1)}.$$ 
The upper bound \eqref{esiheoeogh} is proved to be sharp since it is actually achieved by a family of collapsing ring blow up solutions whose existence was first formally predicted in \cite{FG}. The construction of   ring solutions in  \cite{MRSring} relies on a robust strategy to build {\it minimal blow up elements} developed in \cite{RS2010}. In particular, it is a {\it non dispersive} solution, i.e. a solution that concentrates all its $L^2$ mass at the origin as $t$ converges to the blow-up time.


\subsection{Energy critical problems}\label{sec:energy}


{Energy critical problems} have also recently attracted  considerable attention, in particular wave maps, Schr\"odinger maps and the harmonic heat flow: 
$$
 \begin{array}{lll}\mbox{(Wave Map)}\ \ \pa_{tt}u-\Delta u=(|\nabla u|^2-|\pa_tu|^2)u\\
\mbox{(Schr\"odinger map)}\ \ u\wedge \pa_tu=\Delta u+|\nabla u|^2u\\
\mbox{(Harmonic Heat Flow)}\ \ \pa_tu= \Delta u+|\nabla u|^2u
\end{array}
 \qquad  (t,x)=\Bbb R\times \Bbb R^2, \ u(t,x)\in \Bbb S^2.
$$
These problems exhibit ground state stationary solutions which are minimizers of the associated energy. This energy is also left invariant by the scaling symmetry of the problem. The long standing question of existence of blow up solutions  is solved in \cite{KST}, \cite{RodSter}, \cite{RR2009} for the wave map problem and in \cite{MRR} for the Schr\"odinger map problem. New types of dynamics, including ground state-like behavior or infinite time blow up, are obtained in \cite{KNS}, \cite{DK} for the related semi linear wave equation. For the harmonic heat flow, the full sequence of stable and unstable blow up regimes is obtained in the two papers \cite{Rstud,Rstud2}, which shades some new light on the structure of the flow near the ground state. These works provide a better understanding of the blow up scenario by the determination of explicit regimes, but the full description of the flow, even for initial data close to the ground state, is far from being complete. Another series of works \cite{KDM1}, \cite{KDM2}, aims at classifying all possible behaviors of  the solutions, in particular solving the so-called ``soliton resolution conjecture'' for the energy critical focusing non linear wave equation in   dimension $3$.

\subsection{Open problems}\label{sec:openproblem}

Here is a (non exhaustive) list of interesting open questions concerning the description of the flow near the ground state solitary wave for the $L^2$ critical (NLS). 
\begin{enumerate}
\item Does the set of initial data corresponding to Bourgain-Wang solutions form a codimension 1 manifold? This is clearly suggested by \cite{MRS} and \cite{KSNLS}. 

\item    In \cite{Rannalen}, it is shown that for initial data close to the ground state in the energy space, a blow up solution either follows the log-log regime, or blows up at least as fast as the conformal blow up speed (see section \ref{sec:loglog}). Do there exist blow up solutions in $H^1$ such that the corresponding blow up speed is strictly faster than the  conformal one? Another related question in this context concerns the   existence of a general upper bound on the blow-up speed.

\item Can one obtain a complete classification of the flow near the solitary wave in the spirit of the recent results  for $L^2$ critical gKdV in \cite{MMR1}, \cite{MMR2}, \cite{MMR3} (see section \ref{sectionkdv})?
\end{enumerate}


\section{The $L^2$ critical (gKdV) problem}\label{sectionkdv}


We now summarize the state of the art for the $L^2$ critical (gKdV) problem \eqref{kdv}, and in particular recent results obtained in \cite{MMR1}, \cite{MMR2}, \cite{MMR3}. The gKdV model is one dimensional and we consider $Q$ the unique (up to translation) $H^1$  explicit solution of 
$Q''+Q^5= Q$, i.e. $Q(x) = 3^{\frac 14} \cosh^{-\frac 12}(2x)$.


\subsection{Existence of blow up solutions}


As discussed in the Introduction, the local $H^1$ Cauchy theory of Kenig, Ponce and Vega \cite{KPV} combined with Weinstein's variational characterization of the ground state \cite{W1983}, ensure   that $H^1$ initial data with subcritical mass $\|u_0\|_{L^2}<\|Q\|_{L^2}$ yield global in time $H^1$ solutions. Scattering is   known only for small (no control) $L^2$ data \cite{KPV2}. One  motivation to address  this model lies on its similarities with (NLS) but without the   specific structure of the (NLS) problem, in particular the pseudo conformal symmetry.
Note that recent progresses on critical (NLS), (gKdV) and other nonlinear dispersive models have be obtained in synergy, any new result or technique on one model being source of inspiration for the others.
\\

The study of singularity formation for $H^1$ initial data 
with mass close to the minimal mass
\begin{equation}
\label{utwosmall}
\|Q\|_{L^2}\leq\|u_0\|_{L^2}<\|Q\|_{L^2}+\alpha^* \quad \hbox{for}\quad \alpha^*\ll1, 
\end{equation} 
has been initiated in the series of works \cite{MMjmpa,MMgafa,Mjams,MMannals, MMduke, MMjams}. As for (NLS), variational constraints imply that under \fref{utwosmall}, if the solution blows up in finite time $T<+\infty$, then it admits near the blow up time a decomposition of the form
\be\label{decompkdv}
u(t,x)=\frac{1}{\l^{\frac 12}(t)}(Q+\e)\left(t,\frac{x-x(t)}{\l(t)}\right) \quad \hbox{where} \quad \|\e(t)\|_{H^1}\leq \delta(\alpha^*),
\ee
where $\lim_{\alpha^*\to 0} \delta(\alpha^*)=0$. Therefore, as for (NLS),  the problem reduces to undertand the coupling between the finite dimensional   dynamics governing the solitary wave part, here $(\l(t),x(t))$, and the residual part $\e(t)$. Two new tools were introduced 
in \cite{MMjmpa,Mjams,MMannals,MMjams} for (gKdV), which later were extended to   (NLS):
\begin{itemize}
\item monotonicity formula and  localized virial identities on $\varepsilon$;
\item Liouville type theorems to classify asymptotic solutions.
\end{itemize}
The classical conjecture of existence of blow up dynamics was solved in \cite{Mjams, MMjams}. The original proof in \cite{Mjams} is indirect and based on a classification argument \cite{MMjmpa}. Basic information on the blow up structure was obtained  in \cite{MMannals}, in particular the asymptotic stability of $Q$ as a blow up profile, i.e. $$\e(t)\to 0\ \mbox{in}\ \ L^2_{\rm loc}\ \ \mbox{as}\ \ t\to T.$$ 
Under the further assumption
\be\label{classe2}
\int_{x'>x} u_0^2(x') dx' < \frac {C}{x^6} \ \ \mbox{for}\ \ x>0,
\ee
the first   result of finite time blow up for negative energy solutions satisfying \eqref{smallamss} is proved in \cite{MMjams}, together with a   partial result concerning the blow up rate:
$$\|\nabla u(t_n)\|_{L^2}\lesssim \frac{C_u}{T-t_n}, \quad \hbox{for subsequence $t_n\to T$.}$$
We will see below that a decay assumption of the type \eqref{classe2} is indeed necessary for finite time blow up.  Finally, it is shown in  \cite{MMduke} that $H^1$ solutions with minimal mass $\|u_0\|_{L^2}=\|Q\|_{L^2}$ and decay to the right \fref{classe2}, are global in time ($t\geq 0$), which rules out the possibility of minimal mass blow up under assumption \fref{classe2}.
 

\subsection{Blow  up description and classification of the flow  near the ground state}


We now summarize a series of more recent works \cite{MMR1}, \cite{MMR2}, \cite{MMR3} which give a complete description of the flow near the ground state, thus completing \cite{MMjmpa,MMgafa,Mjams,MMannals,MMjams,MMduke}. With respect to these earlier works, improved results involve techniques developed for the study of (NLS) \cite{MRannals,MRgafa,MRinvent,MRjams,MRcmp,Rannalen} and energy critical geometrical problems \cite{RR2009,MRR,Rstud}.
\\ 

Consider the following set of initial data
$$
\mathcal{A}=
\left\{
u_0=Q+\e_0   \hbox{ with } \|\e_0\|_{H^1}<\alpha_0 \hbox{ and }
\int_{y>0} y^{10}\e_0^2< 1
\right\} \hbox{ for $\alpha_0>0$ small}.$$
To consider  initial data with decay on the right, instead of data simply in the energy space, is necessary for the (gKdV) equation -- see
Theorem \ref{th:4} below. Despite   many analogies between the two problems, it is  a fundamental difference with respect to   (NLS) analysis.

\begin{theorem}[Blow up for nonpositive energy solutions in $\mathcal A$, \cite{MMR1}]\label{th:1} 
Let $0<\alpha_0\ll 1$. Let $u_0 \in \mathcal{A}$.
If $E(u_0)\leq 0 $ and  $u_0$ is not a soliton, then $u(t)$ blows up in finite time
$T$ and  there exists $ \ell_0=\ell_0(u_0)>0$ such that
\be\label{blow}
\|u_x(t)\|_{L^2} = \frac {\|Q'\|_{L^2}+o(1)}{\ell_0 (T-t)} \quad 
\hbox{as $t\to T$.}
\ee
Moreover,
there exist $\lambda(t)$, $x(t)$ and $u^*\in H^1$, $u^*\neq 0$,
such that 
\begin{equation}\label{th1.1}
u(t,x)-\frac{1}{\lambda^{\frac12}(t)}Q\left(\frac{x-x(t)}{\lambda(t)}\right)\to u^*\ \ \mbox{in}\ \ L^2\ \ \mbox{as}\ \ t\to T,
\end{equation} 
\be\label{bbg}\lambda(t)= (\ell_0+o(1)) (T-t), \quad x(t)= \left(\frac 1 {\ell_0^2}+o(1)\right)\frac 1{(T-t)}\ \ \mbox{as} \ \ t\to T,\ee
\end{theorem}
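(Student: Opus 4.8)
The plan is to carry out a modulation analysis around a refined family of approximate blow-up profiles, control the remainder by a mixed energy--virial Lyapunov functional, and then read off the blow-up speed by integrating the modulation ODE's, conservation of energy serving to lock the geometric parameters together. First I would build, for $0<b\ll 1$, an approximate profile $Q_b=Q+bP+\cdots$, suitably truncated at scale $\sim 1/b$, solving the self-similar profile equation $(Q_b''-Q_b+Q_b^5)'+b(\Lambda Q_b)'=\Psi_b$ with $\Lambda=\tfrac12+y\partial_y$ and an error $\Psi_b$ that is small but only mildly localized; the correction $P$ does not decay on the left, an obstruction inherent to the $L^2$ critical gKdV that both forbids exact self-similar blow up and generates the non-self-similar rate $\lambda\sim(T-t)$. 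As long as the solution stays close to the soliton manifold --- which for $u_0\in\mathcal A$ holds on the whole existence interval and is reinforced by the bootstrap below --- the implicit function theorem, using the spectral structure of $\mathcal L=-\partial_y^2+1-5Q^4$ (namely $\mathcal L Q'=0$, $\mathcal L(\Lambda Q)=-2Q$, and a single negative eigenvalue), produces a decomposition
\be
u(t,x)=\frac{1}{\lambda^{1/2}(t)}\,\big(Q_{b(t)}+\varepsilon\big)\!\left(t,\frac{x-x(t)}{\lambda(t)}\right),\qquad \frac{ds}{dt}=\frac{1}{\lambda^3},
\ee
with $(\lambda,x,b)$ pinned by orthogonality conditions on $\varepsilon$ adapted to the scaling, translation and $b$-directions. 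Differentiating these yields modulation bounds, schematically $|\lambda_s/\lambda+b|+|x_s/\lambda-1|\lesssim b^2+\|\varepsilon\|_{\rm loc}$ and $|b_s|\lesssim b^2+\|\varepsilon\|_{\rm loc}$, where $\|\cdot\|_{\rm loc}$ is a suitable exponentially weighted local $H^1$ norm, so to leading order $\lambda_s/\lambda\approx-b$ with $b$ slowly varying. Crucially, since $E(Q)=0$ while $P$ is \emph{not} $L^2$-orthogonal to $Q$ (its left tail makes $\int QP\neq 0$), the profile energy is \emph{linear} in $b$: $E(Q_b)=c_1b+O(b^2)$ with $c_1<0$; conservation of energy $E(u)=\lambda^{-2}E(Q_b+\varepsilon)=E_0$ then forces $b=(E_0/c_1)\lambda^2(1+o(1))$, so for $E_0<0$ one gets $b\sim\ell_0\lambda^2$ with $\ell_0=\ell_0(u_0)>0$ --- the degenerate case $E_0=0$ needing a separate sign/monotonicity argument that uses that $u_0$ not a soliton means $\varepsilon\not\equiv 0$.

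The heart of the proof is the control of $\varepsilon$. I would next introduce an \emph{energy--virial Lyapunov functional}
\be\label{plan:F}
\mathcal F=\int\Big(\varepsilon_y^2+\varepsilon^2\psi_B-\tfrac13\big((Q_b+\varepsilon)^6-Q_b^6-6Q_b^5\varepsilon\big)\chi_B\Big)+b\int\varphi_B\,\varepsilon\,\varepsilon_y,
\ee
with cutoffs $\psi_B,\chi_B,\varphi_B$ at scale $B\sim b^{-1}$ shifted to the right, and prove: (i) \emph{coercivity}, $\mathcal F\gtrsim\|\varepsilon\|_{\rm loc}^2$ modulo the orthogonality directions, which in one space dimension follows from the explicit coercivity of $\mathcal L$ under the chosen conditions; (ii) an \emph{almost-monotonicity formula}: along the flow $\lambda^{-2\sigma}\mathcal F$, for a small $\sigma>0$ (the weight $\lambda^{-2\sigma}$ being monotone since $\lambda$ decreases when $b>0$), is non-increasing up to small profile-error contributions and moreover dominates $b\,\|\varepsilon\|_{\rm loc}^2$. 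The coercive dissipative term $-c\,b\,\|\varepsilon\|_{\rm loc}^2$ is produced by the virial cross-term $b\int\varphi_B\varepsilon\varepsilon_y$, exploiting the Martel--Merle dispersive virial structure for gKdV (where the linearized generator $\partial_y\mathcal L$ is not skew-adjoint); and the one-directional dispersion of gKdV, together with the decay hypothesis $\int_{y>0}y^{10}\varepsilon_0^2<1$ built into $\mathcal A$, is exactly what lets one discard the boundary and flux terms coming from the right.

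The conclusion would then follow from a trapping/bootstrap argument: in a tube $\{\,0<b\ll1,\ \|\varepsilon\|_{\rm loc}\ll b,\ \text{weighted right-decay controlled}\,\}$, integrating the almost-monotonicity of $\lambda^{-2\sigma}\mathcal F$ upgrades the a priori bound on $\varepsilon$, the bootstrap closes, and the solution remains in the tube up to its maximal time $T$. Feeding $\|\varepsilon\|_{\rm loc}\ll b$ back into the modulation bounds gives $\lambda_s/\lambda=-b(1+o(1))$, hence $\lambda_t=-b/\lambda^2=-\ell_0(1+o(1))$; integrating yields $\lambda(t)=(\ell_0+o(1))(T-t)$, in particular $T<\infty$, and since $\|u_x(t)\|_{L^2}^2=\lambda^{-2}\int(Q_b+\varepsilon)_y^2=\lambda^{-2}(\|Q'\|_{L^2}^2+o(1))$ this gives \eqref{blow}; integrating $x_t\approx\lambda^{-2}\sim\ell_0^{-2}(T-t)^{-2}$ gives \eqref{bbg}. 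Finally, conservation of mass and the monotonicity estimates (no mass leaks away) give $u(t)-\lambda^{-1/2}Q\!\left(\tfrac{\cdot-x(t)}{\lambda}\right)\to u^*$ in $L^2$, that is \eqref{th1.1}; and $u^*\neq 0$ because the solution cannot be of minimal mass: if $\|u_0\|_{L^2}=\|Q\|_{L^2}$ then Weinstein's sharp Gagliardo--Nirenberg inequality forces $E_0\ge 0$ with equality only at the soliton, contradicting the hypotheses $E_0\le 0$ and $u_0$ not a soliton, so $\|u_0\|_{L^2}>\|Q\|_{L^2}$ and the excess mass, which does not concentrate at the singularity, survives as a nontrivial $u^*\in H^1$.

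\emph{Main obstacle.} The decisive and most delicate step is the construction of the energy--virial functional \eqref{plan:F}: arranging it to be \emph{simultaneously} coercive and almost-monotone while absorbing the mildly localized profile error $\Psi_b$ and the rightward mass flux. This is precisely where the one-sided dispersion of gKdV and the decay built into $\mathcal A$ are indispensable, with no counterpart in the pseudo-conformal (NLS) analysis. A secondary difficulty is the borderline case $E_0=0$, for which the rate is still $(T-t)^{-1}$ but $\ell_0$ is no longer read off from $E_0$ and the positivity of $b$ must instead be extracted from the internal dynamics of $\varepsilon$.
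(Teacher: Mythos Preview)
Your overall architecture --- refined profile $Q_b$, modulation decomposition, mixed energy--virial Lyapunov control of $\varepsilon$, bootstrap, then integration of the parameter ODEs --- is the paper's. Two implementation choices differ in ways worth recording.

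\emph{How $b/\lambda^2$ is locked.} You pin $b/\lambda^2\sim\ell_0$ through conservation of energy, via $E(Q_b)=c_1b+O(b^2)$. The paper instead pushes the profile expansion one order further and extracts the sharp law $b_s+2b^2=O(\|\varepsilon\|_{\rm loc}^2+b^3)$ (not merely $|b_s|\lesssim b^2+\|\varepsilon\|_{\rm loc}$ as you write); combined with $\lambda_s/\lambda+b=O(\ldots)$ this gives $(b/\lambda^2)_s\approx 0$ purely dynamically, packaged as the rigidity estimate \eqref{conrolbintegre}. The three scenarios (Blow up)/(Soliton)/(Exit) are then read off from the sign of the almost--conserved quantity $b/\lambda^2$, and the hypothesis $E_0\le 0$ enters only afterwards to exclude (Soliton) and (Exit). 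In particular the borderline $E_0=0$, which you rightly flag as awkward for the energy route, needs no separate treatment.

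\emph{Structure of the Lyapunov functional.} Your functional carries a $b$--weighted virial cross term $b\int\varphi_B\,\varepsilon\,\varepsilon_y$ producing dissipation $-c\,b\,\|\varepsilon\|_{\rm loc}^2$; this is the template from the energy--critical geometric flows. The paper's functional (Proposition~\ref{propasymp}) has no explicit $b$ factor: the virial dissipation is encoded directly in the asymmetric weight $\varphi$ (equal to $1+y$ for $|y|<\tfrac12$, exponentially decaying on the left, linear on the right) together with a second weight $\psi$ on $\varepsilon_y^2$, yielding $\frac{d}{ds}\mathcal F_i+\mu\int(\varepsilon_y^2+\varepsilon^2)\varphi'(y/B)\lesssim|b|^4$ \emph{regardless of the sign of $b$}. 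A nonlinear lower--order correction $\mathcal J_i$ built from $(\varepsilon,\int_{-\infty}^y\Lambda Q)$, and the use of \emph{two} functionals --- one scaling--invariant ($\mathcal F_1$) and one scaling--weighted ($\mathcal F_2/\lambda^2$) --- are what feed into the integrated rigidity bound. Your version presupposes $b>0$ inside the bootstrap to get the right sign of dissipation, which is fine once $E_0<0$ has fixed the sign, but the paper's sign--independent monotonicity is what lets the same estimate serve all three regimes simultaneously.
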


\noindent{\bf Comments on Theorem \ref{th:1}}

\medskip

\noindent{\it 1. Blow up speed and stable blow up.}  An important feature of Theorem \ref{th:1} is the derivation of the   blow up speed for $u_0\in \mathcal{A}$ with non positive energy: \be\label{bll}\|u_x(t)\|_{L^2}\sim \frac{C(u_0)}{T-t}\ee 
which implies in particular that $x(t)\to +\infty$ as $t\to T$.
The concentrating soliton and the remainder term $u^*$ thus split spatially.
Observe that the blow up speed is   far above the scaling blow up law which would be for (gKdV): $\|u_x(t)\|_{L^2}\sim c (T-t)^{-\frac13}$
(see  \cite{MRR}, \cite{Rstud} for a similar gap phenomenon in energy critical geometrical problems ). 
\\
To complement Theorem 1, one knows that the set of initial data in $\mathcal A$ which led to the blow up \eqref{blow}--\eqref{bbg} is open in the $H^1$ topology (see \cite{MMR1}). Thus, $(T-t)^{-1}$ is the  {\it stable blow up}  behavior
for (gKdV), which is in contrast with (NLS).

\medskip

\noindent{\it 2.   Decay assumption on the right.} Let us stress the importance of the decay  assumption on the right in space for the initial data, which was already fundamental in the earlier works \cite{MMjams}, \cite{MMduke}.
Indeed, in contrast with the NLS equation, the universal dynamics can not be seen in $H^1$ and an additional assumption of decay to the right  is required (see Theorem \ref{th:4} below).
Note however, that  we do not claim sharpness in the weight $y^{10}$ in Theorem \ref{th:1}.

\medskip

\noindent{\it 3.   Dynamical characterization  of $Q$}: Recall from the variational characterization of $Q$ that $E(u_0)\leq 0$ implies $\|u_0\|_{L^2}>\|Q\|_{L^2}$, unless $u_0\equiv Q$ up to scaling and translation symmetries. Theorem \ref{th:1} therefore recovers  the dynamical classification of $Q$ as the unique global zero energy solution in $\mathcal{A}$, like for the mass critical (NLS), see \cite{MRjams}. 

\medskip

A key to the complete description of the flow around $Q$ is  the minimal mass case,   solved by the following result.
 
\begin{theorem}[Existence and uniqueness of the minimal mass blow up element, \cite{MMR2}]
\label{th:2}\quad \\
{\rm (i) Existence.} There exists a   solution $S_{\rm KdV}(t)\in \mathcal C((0,+\infty),H^1)$ of \fref{kdv} with minimal mass $$\|S_{\rm KdV}(t)\|_{L^2}=\|Q\|_{L^2}$$ which blows up backward at the origin:
\be
\label{speedlvowup}
\|S_{\rm KdV}(t)\|_{H^1}= \frac {\|Q'\|_{L^2}+o(1)}t \ \ \mbox{as} \ \ t\downarrow 0,
\ee
$$S_{\rm KdV}(t,x)-\frac{1}{t^{\frac 12}}Q\left(\frac{x+ \frac 1t+\bar{c}t}{t}\right)\to 0\ \ \mbox{in}\ \ L^2\ \ \mbox{as}\ \ t\downarrow 0$$ 
where $\bar{c}$ is a universal constant.

\noindent{\rm  (ii) Uniqueness. } Let $u_0\in H^1$ with $\|u_0\|_{L^2}=\|Q\|_{L^2}$ and assume that the corresponding solution $u(t)$ to \fref{kdv} blows up in finite time. Then $$u\equiv S_{\rm KdV}$$ up to the symmetries of the flow.
  \end{theorem}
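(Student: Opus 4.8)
\medskip
\noindent\textbf{Existence.} The plan is to build $S_{\rm KdV}$ by a backward-in-time dynamical construction, in the spirit of the constructions of exotic and minimal blow up elements of \cite{RS2010} and \cite{MRSring}. First I would pass to the self-similar frame, writing $u(t,x)=\lambda(t)^{-\frac12}v(s,y)$ with $y=(x-x(t))/\lambda(t)$ and $\frac{ds}{dt}=-\lambda^{-3}$, setting $b=-\lambda_s/\lambda$, and looking for $v=Q_b+\e$ where $Q_b$ is an approximate self-similar profile. At leading order $Q_b=Q+bP+O(b^2)$ with $P$ solving $(\mathcal{L}P)'=\Lambda Q$, where $\mathcal{L}=-\pa_y^2+1-5Q^4$ and $\Lambda Q=\frac12Q+yQ'$. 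Since $\int\Lambda Q\neq0$, the function $P$ does not decay at $+\infty$ but tends to a nonzero constant; one truncates this tail at scale $1/b$, and this slowly decaying tail is exactly what produces the shift $\bar c t$ and forces the left/right asymmetry underlying the decay-to-the-right hypothesis in part (ii). Matching at the next order fixes a formal law of the type $b_s\simeq-cb^2$, $\lambda_s/\lambda=-b$, hence $b\to0$, $\lambda(t)\sim t$ and $|x(t)|\sim1/t$ in the original time, and one computes $\|Q_b\|_{L^2}^2=\|Q\|_{L^2}^2+O(b^2)$. I would then fix $t_n\downarrow0$, solve \eqref{kdv} backward from $t=t_n$ with data a small modification of the renormalized profile $\lambda_n^{-\frac12}Q_{b_n}((\cdot-x_n)/\lambda_n)$ adjusted to have mass exactly $\|Q\|_{L^2}$, and prove uniform bounds on a fixed interval $(0,T_0]$ by combining modulation equations for $(\lambda,x,b)$ (from suitable orthogonality conditions on $\e$) with a mixed energy--virial Lyapunov functional adapted to the $1/t$ rate, which controls $\|\e\|_{H^1}$ locally and is monotone. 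Passing to the limit $n\to\infty$, the virial bound ensuring tightness and hence conservation of mass in the limit, yields $S_{\rm KdV}\in\mathcal{C}((0,+\infty),H^1)$ with $\|S_{\rm KdV}(t)\|_{L^2}=\|Q\|_{L^2}$; monotonicity of the Lyapunov functional gives $\e(t)\to0$, hence \eqref{speedlvowup} and the stated $L^2$ convergence, with $\bar c$ emerging from the profile tail.

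\medskip
\noindent\textbf{Uniqueness.} Let $u_0\in H^1$ with $\|u_0\|_{L^2}=\|Q\|_{L^2}$ whose solution blows up at $T<+\infty$; using the symmetries of \eqref{kdv} (time translation and $(t,x)\mapsto(-t,-x)$) one may assume the blow up occurs as $t\downarrow0$. From Weinstein's variational characterization of $Q$ \cite{W1983}, conservation of mass and energy, and $E(Q)=0$, the solution admits near $t=0$ a decomposition $u(t)=\lambda^{-\frac12}(Q+\e)((\cdot-x)/\lambda)$ with $\lambda\to0$ and $\|\e(t)\|_{H^1}\to0$; since any weak-limit profile would carry mass $\|u_0\|_{L^2}^2-\|Q\|_{L^2}^2=0$, the solution is compact modulo modulation. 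Running the same modulation and Lyapunov machinery forward toward $t=0$, the minimal-mass constraint — together with energy conservation — removes the single degree of freedom that separates finite-time blow up from the global regimes near $Q$, so the Lyapunov functional stays monotone and produces sharp asymptotics for $(\lambda,x,b)$ consistent with $\lambda(t)=(1+o(1))t$, $x(t)=-(1+o(1))/t$, and a definite power decay of $\|\e(t)\|_{H^1}$ as $t\downarrow0$. Comparing two such solutions in the common renormalized frame after matching the symmetries, the sharp decay of $\e$ lets one close a Gronwall / backward-uniqueness estimate on the difference and conclude that they coincide, so $u\equiv S_{\rm KdV}$ up to the symmetries of \eqref{kdv}.

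\medskip
\noindent\textbf{Main obstacle.} The crux is the uniqueness statement. Minimal-mass blow up sits on the separatrix between finite-time blow up and global dynamics near $Q$, a dynamically unstable regime, so straightforward energy estimates do not close: one must extract asymptotics for the modulation parameters and for $\e$ sharp enough to feed a rigidity argument, which is possible only because the two scalar conservation laws — mass pinned to $\|Q\|_{L^2}$ and energy — jointly control the would-be unstable mode. At the same time, the non-decaying tail of $P$, responsible for the universal constant $\bar c$ and for the decay-to-the-right assumption, makes both the construction of the profiles $Q_b$ and the tuning of the monotonicity formulas markedly more delicate than in the stable blow-up setting of Theorem \ref{th:1}.
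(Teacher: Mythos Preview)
Your existence argument is a legitimate strategy in the spirit of \cite{RS2010}, but it is \emph{not} the route taken in the paper. The paper does not run a backward-in-time construction from data placed at times $t_n\downarrow 0$. Instead it obtains $S_{\rm KdV}$ as a weak limit of a sequence of \emph{subcritical mass, defocusing} solutions: one takes well-prepared data $u_n(0)=Q_{b_n(0)}$ with $b_n(0)=-\tfrac1n$, so that $\|u_n(0)\|_{L^2}<\|Q\|_{L^2}$ and the solution lies in the (Exit) regime of Theorem~\ref{th:3}. Using the sharp control of the modulation parameters on $[0,t_n^*]$ one computes $\lambda_n(t_n^*)^2\sim n\alpha^*\to\infty$, renormalizes $v_n(\tau,x)=\lambda_n^{1/2}(t_n^*)\,u_n(t_n^*+\tau\lambda_n^3(t_n^*),\,\lambda_n(t_n^*)x+x_n(t_n^*))$, observes that $\lambda_{v_n}(\tau)\sim 1+\tau\alpha^*$ is \emph{independent of $n$} at leading order, and extracts a weak $H^1$ limit $v$. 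Weak lower semicontinuity gives $\|v\|_{L^2}\le\|Q\|_{L^2}$, while finite-time blow up forces $\|v\|_{L^2}\ge\|Q\|_{L^2}$, hence equality. The payoff of this indirect construction is that the very same compactness argument, run on an arbitrary sequence of (Exit) solutions (with the additional ingredients of an $L^2$ profile decomposition for the Airy group and refined local $H^1$ bounds), yields the universality of $S_{\rm KdV}$ as the attractor at the exit time in Theorem~\ref{th:3}. Your direct approach would not give that for free.

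Two smaller points. First, the tail of $P_1$ goes the other way: in the paper $P_1$ is chosen exponentially localized as $y\to+\infty$ and tends to a nonzero constant as $y\to-\infty$; this is consistent with the fact that the decay hypothesis in $\mathcal A$ is imposed on the \emph{right}. Second, your sign in $\frac{ds}{dt}=-\lambda^{-3}$ is inconsistent with the paper's convention $\frac{ds}{dt}=\lambda^{-3}$. For uniqueness the paper gives no sketch and simply refers to \cite{MMR2}; your outline is in the right spirit.
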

  
For critical (gKdV), one recovers both the existence of a minimal mass solution, completely analoguous to $S_{\rm NLS}(t)$, and the analogue of Merle's classification result for (NLS), i.e. Theorem~\ref{critmass}.
In the absence of pseudo conformal symmetry,   the uniqueness part of the proof of Theorem \ref{th:2} is   completely dynamical and closely related to the analysis of the inhomogeneous NLS model in \cite{RS2010}.
The existence part is related to the  universality of ``the exit regime'', see Theorem \ref{th:3} below and   Section 4 for a sketch of proof.

Observe that $S_{\rm KdV}(t)$ blows up  with  the same   speed as the stable blow of Theorem~\ref{th:1}. However,  the minimal mass blow up is by essence unstable by perturbation since for example initial  data $S_\e(-1)=(1-\e)S_{\rm KdV}(-1)$, for $0<\e<1$,  have subcritical mass and   thus lead to global and bounded solutions.

\medskip

We now recall the main result from \cite{MMR1, MMR2}, which classifies the flow for initial data in $\mathcal A$. Define the $L^2$ modulated tube around the soliton manifold:
\be\label{tube}
\mathcal T_{\alpha^*}=\left\{u\in H^1\ \ \mbox{with}\ \ \inf_{\l_0>0, \ x_0\in \RR}
\Big\|u-\frac 1{\lambda_0^{\frac 12}} Q\left(\frac {.-x_0}{\lambda_0} \right) \Big\|_{L^2} <\alpha^*\right\}.\ee 
Here $\alpha_0,\alpha^*$ are universal constants related as follows 
\be
\label{relationalpha}
0<\alpha_0\ll \alpha^*\ll1.
\ee

\begin{theorem}[Rigidity of the dynamics in $\mathcal{A}$, \cite{MMR1,MMR2}]
\label{th:3}
For $u_0 \in \mathcal{A}$, only three scenarios are possible:

 \medskip

\noindent{\em (Blow up)}  For all $t\in [0,T),$ $u(t)\in \mathcal{T}_{\alpha^*}$ and the solution blows up in finite time $T<+\infty$ in the regime described in  Theorem \ref{th:1}: \eqref{blow}, \eqref{th1.1} and \eqref{bbg}.
\medskip

\noindent{\em (Soliton)}  The solution is global, for all $t\geq 0, $ $u(t)\in \mathcal{T}_{\alpha^*}$, and  there exist $\lambda_{\infty}>0$ and $x(t)$ such that  
\begin{equation}
\l_\infty^{\frac 12} u(t,\l_\infty \cdot +x(t))\to Q\quad \hbox{in $H^1_{\rm loc}$ as $t\to +\infty$},
\end{equation} 
\be
\label{nveonveonoene}
|\l_{\infty}-1|\leq o_{\alpha_0\to 0}(1),\quad 
 x(t)\sim \frac{t}{\lambda_{\infty}^2} \ \ \mbox{as}\ \ t\to +\infty.
\ee 
\medskip

\noindent{\rm (Exit)}  There exists $t^*\in (0,T)$ such that 
$u(t^*)\not \in \mathcal{T}_{\alpha^*}$. Let $t^*_{u}\gg 1$ be the corresponding exit time
\be\label{exittime}
t^*_{u}=\sup\{ 0<t<T;\ \hbox{such that } \forall t'\in [0,t], \ u(t)\in \mathcal T_{\alpha^*}\}.
\ee
Then there exist  $\tau^*=\tau^*(\alpha^*)$ (independent of $u$) and $(\lambda_u^*,x_u^*)$ such that 
$$\left\|  (\l_u^*)^{\frac 12} u\left(t_u^*,   \l_u^* x +  x_u^*\right) -S_{\rm KdV}(\tau^*,x)\right\|_{L^2} \leq \delta(\alpha_0),$$
where 
$\delta(\alpha_0) \to 0$ as ${\alpha_0\to 0}.$
\end{theorem}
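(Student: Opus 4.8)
The plan is to set up the standard modulation framework inside the tube $\mathcal{T}_{\alpha^*}$, derive a sharp finite-dimensional dynamical system for the geometric parameters coupled with monotonicity control on the remainder $\e$, and then run a bootstrap/trapping argument that forces the trichotomy. First I would note that as long as $u(t)\in\mathcal T_{\alpha^*}$, the variational characterization of $Q$ (Weinstein) plus the implicit function theorem yield a unique $\mathcal C^1$ decomposition $u(t,x)=\lambda^{-1/2}(t)(Q+\e)(t,(x-x(t))/\lambda(t))$ with suitable orthogonality conditions on $\e$ (orthogonality to the scaling and translation directions, and possibly to a third direction tied to the slowly decaying eigenmode); this determines $(\lambda(t),x(t))$ and a rescaled time $s$ with $ds/dt=\lambda^{-3}$. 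The key dynamical parameter is $b(t)=-\lambda_s/\lambda$, and the heart of the matter is the formal system
\begin{equation*}
b_s \simeq -c_0 b^2 + (\text{error}), \qquad \frac{\lambda_s}{\lambda} \simeq -b,
\end{equation*}
together with the sign and size of a conserved-like quantity $J$ (built from $b$, $E(u_0)$, $\|\e\|$, and a virial/energy correction) which plays the role of $\ell_0$. The monotonicity formulae and localized virial identities on $\e$ (the first tool cited for (gKdV)) give the a priori bound $\|\e(s)\|_{H^1}\lesssim \alpha_0 + (\text{powers of } b)$ and control the error terms in the $b_s$ equation, so that the parameter dynamics genuinely decouples from the infinite-dimensional part at leading order.

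Next I would classify solutions by the behavior of $b(s)$, equivalently by the sign of $J$. If $J<0$, the ODE $b_s\simeq -c_0 b^2$ with the coupling forces $b(s)\to$ a positive limit $\ell_0$, $\lambda(s)\to 0$ in finite rescaled-then-real time, and integrating $\lambda_s/\lambda\simeq -b$ and $x_s\simeq 1$ gives exactly \eqref{blow}, \eqref{th1.1}, \eqref{bbg}; one must check the solution never exits $\mathcal T_{\alpha^*}$, which follows because $\|\e\|$ stays small along this branch — this is the (Blow up) case, and it contains Theorem \ref{th:1} as the subcase $E(u_0)\le 0$. If $J=0$ (a codimension-one condition), $b(s)\to 0$, $\lambda(s)\to\lambda_\infty>0$, and asymptotic stability of $Q$ (the Liouville-type classification, the second cited tool) upgrades $\e(s)\to 0$ in $H^1_{\rm loc}$, giving the (Soliton) case with $|\lambda_\infty-1|=o_{\alpha_0\to0}(1)$ and $x(t)\sim t/\lambda_\infty^2$. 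If $J>0$, then $b(s)$ decreases through zero and becomes negative of order one, which eventually drives $\e$ out of the small régime: the solution must exit $\mathcal T_{\alpha^*}$ at a finite time $t_u^*$. The delicate point in the (Exit) branch is to show the rescaled profile at the exit time is universal: I would argue that on the boundary of the tube the solution has, after rescaling by $(\lambda_u^*,x_u^*)$, essentially exhausted the one-parameter family of "near-minimal-mass" profiles, and by the continuity of the flow in $L^2$ and the uniqueness in Theorem \ref{th:2}(ii), this profile must coincide (to $\delta(\alpha_0)$) with $S_{\rm KdV}(\tau^*)$ for a fixed $\tau^*=\tau^*(\alpha^*)$ — indeed $S_{\rm KdV}$ is precisely the object that "exits at scale $\alpha^*$" after time $\tau^*$, which is the content of relating the existence of $S_{\rm KdV}$ to the universality of the exit régime.

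The main obstacle, I expect, is the sharp control of the remainder $\e$ uniformly along each branch — in particular obtaining coercivity of the relevant quadratic form on $\e$ under the chosen orthogonality conditions (the "mild coercivity of the linearized operator near $Q$"), and propagating the bootstrap bounds on $\|\e\|_{H^1}$ and on the $b_s$-error terms through the monotonicity formulae without losing the small constant $\alpha_0$ relative to $\alpha^*$ (the hierarchy \eqref{relationalpha} is essential here). A secondary difficulty is the rigidity argument in the (Exit) case: identifying the limiting profile with $S_{\rm KdV}(\tau^*)$ really uses the full strength of the existence/uniqueness Theorem \ref{th:2}, so the three theorems \ref{th:1}, \ref{th:2}, \ref{th:3} are genuinely intertwined and must be proved together rather than sequentially. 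Finally, all of this requires the decay assumption $\int_{y>0}y^{10}\e_0^2<1$ to make the monotonicity/virial functionals on the right-hand side finite and to prevent the slowly dispersing tail from contaminating the parameter dynamics — without it the trichotomy fails, as Theorem \ref{th:4} shows.
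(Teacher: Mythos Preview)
Your modulation setup and the formal system $b_s\simeq -2b^2$, $\lambda_s/\lambda\simeq -b$ are correct, and you rightly see that the trichotomy must come from a near-conserved scalar coupled to the parameter dynamics. But there are two genuine gaps.

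First, the dynamical picture in the (Blow up) branch is incorrect as stated: you claim $b(s)\to\ell_0>0$, but in fact $b(s)\to 0$ as $s\to+\infty$ (equivalently $t\to T$). What is approximately conserved is the ratio $b/\lambda^2$, since $(b/\lambda^2)_s=(b_s+2b^2)/\lambda^2$ vanishes at leading order; in original time this ratio is exactly $-\lambda_t$, and it is this quantity that equals $\ell_0$. The paper makes this precise via a rigidity estimate of the form
\[
\left|\frac{b(s_2)}{\lambda^2(s_2)}-\frac{b(s_1)}{\lambda^2(s_1)}\right|
\lesssim \frac{b^2(s_1)}{\lambda^2(s_1)}+\frac{b^2(s_2)}{\lambda^2(s_2)}+\frac{\mathcal N(s_1)}{\lambda^2(s_1)},
\]
where $\mathcal N$ is a weighted $H^1$ norm of $\e$ controlled by a mixed energy--virial Lyapunov functional $\mathcal F_i/\lambda^{2i}$. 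The trichotomy is then: either at some time $|b(s_1)|\gg\mathcal N(s_1)$, in which case $b/\lambda^2$ is trapped near a nonzero constant $c_0$ and its \emph{sign} gives (Blow up) versus (Exit); or $|b(s)|\lesssim\mathcal N(s)$ for all $s$, which combined with the space--time bound $\int\mathcal N\,ds<\infty$ coming from the Lyapunov monotonicity gives $\int|\lambda_s/\lambda|\,ds<\infty$ and hence (Soliton). Your ``conserved-like quantity $J$ built from $b$, $E(u_0)$, $\|\e\|$'' is not the right object: the energy plays no direct role in selecting the branch here, in contrast with the (NLS) situation, and the (Soliton) case is not characterized by $J=0$ but by $|b|$ never dominating $\mathcal N$.

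Second, your argument for the universality of the (Exit) profile is too soft. Continuity of the flow in $L^2$ plus the uniqueness in Theorem~\ref{th:2} is not enough: one must extract a limit from a renormalized sequence $v_n$ of (Exit) solutions whose data are \emph{not} well-prepared in $H^1$. The paper uses an $L^2$ concentration--compactness/profile decomposition for the Airy group to extract a nontrivial weak limit, and separately establishes uniform \emph{local} $H^1$ bounds on $v_n$ to guarantee that the limit lies in $H^1$. Only then does the uniqueness part of Theorem~\ref{th:2} identify the limit as $S_{\rm KdV}$ up to symmetries. Without these two ingredients the identification does not close.
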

 
 Theorem \ref{th:3} classifies the  possible behaviors  of solutions with initial data in $\mathcal A$, including a certain description of the long time dynamics in the (Exit) regime. This question happens to be deeply related to the  {minimal mass dynamics}, which is an unexpected new phenomenon. Indeed, any (Exit) solution at its exit time is close to a universal profile $S_{\rm KdV}(\tau^*)$, up to a large defocusing scaling $\l^*_u\gg 1$,   premise of a dispersive behavior.
In view of the universality of $S_{\rm KdV}$ as a attractor to all exiting solutions, and in continuation of Theorem \ref{th:3}, it is thus an important open problem 
  to understand the behavior of $S_{\rm KdV}(t)$ as $t\to +\infty$. For the mass critical (NLS), $S_{\rm NLS}(t)$  scatters  as $t\to\infty$. Scattering of $S_{\rm KdV}(t)$ as $t\to +\infty$ is an open problem\footnote{by scattering for (gKdV), we mean that there exists a solution $v(t,x)$ to the Airy equation $\pa_tv+v_{xxx}=0$ such that $\lim_{t\to +\infty}\|S_{\rm KdV}(t)-v(t)\|_{L^2}=0$.}. We conjecture that $S_{\rm KdV}(t)$ actually scatters, and because scattering is an open property in $L^2$  (see \cite{KPV2}), we obtain the following corollary.
  
\begin{corollary}[\cite{MMR2}]
Assume  that $S_{\rm KdV}(t)$ scatters as $t\to+\infty$. Then, any solution in the {\em (Exit)} scenario is global for positive time and  scatters as $t\to +\infty$.
\end{corollary}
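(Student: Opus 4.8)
The plan is to feed the (Exit) conclusion of Theorem \ref{th:3} into the fact that scattering is an open property in $L^2$ for \eqref{kdv} (see \cite{KPV2}). First I would fix the universal constant $\alpha^*$, which in particular fixes the time $\tau^*=\tau^*(\alpha^*)>0$ produced by Theorem \ref{th:3}. Under the standing hypothesis, the solution of \eqref{kdv} with datum $S_{\rm KdV}(\tau^*)$ — namely $t\mapsto S_{\rm KdV}(t)$ for $t\geq\tau^*$ — is global for positive times and scatters. Hence, by openness of scattering, there exists $r=r(\tau^*)>0$ such that every solution of \eqref{kdv} issued from some $v_0\in H^1$ with $\|v_0-S_{\rm KdV}(\tau^*)\|_{L^2}<r$ is global forward in time and scatters. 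Only then would I shrink $\alpha_0$ — this is legitimate since $\alpha_0\ll\alpha^*$ in \eqref{relationalpha} and $\alpha_0$ may always be taken smaller without affecting Theorems \ref{th:1}--\ref{th:3} — so as to ensure $\delta(\alpha_0)<r$.

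Next, let $u_0\in\mathcal A$ produce an (Exit) solution, with exit time $t_u^*$ and parameters $(\lambda_u^*,x_u^*)$; note $t_u^*<T$ by the very definition of the (Exit) scenario, so $u(t_u^*)\in H^1$ is well defined. Set $v_0(y):=(\lambda_u^*)^{\frac12}u(t_u^*,\lambda_u^* y+x_u^*)\in H^1$. By the (Exit) part of Theorem \ref{th:3} we have $\|v_0-S_{\rm KdV}(\tau^*)\|_{L^2}\leq\delta(\alpha_0)<r$, so the solution $v(s,y)$ of \eqref{kdv} with $v(0)=v_0$ is global on $[0,+\infty)$ and scatters: there is a solution $w$ of the Airy equation $\pa_t w+w_{xxx}=0$ with $\|v(s)-w(s)\|_{L^2}\to 0$ as $s\to+\infty$.

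Finally I would undo the symmetries. Since \eqref{kdv} is invariant under the scaling $u(t,x)\mapsto(\lambda_u^*)^{\frac12}u((\lambda_u^*)^{3}t,\lambda_u^* x)$ and under translations in $t$ and $x$, the function $(s,y)\mapsto(\lambda_u^*)^{\frac12}u((\lambda_u^*)^{3}s+t_u^*,\lambda_u^* y+x_u^*)$ is a solution of \eqref{kdv} with the same datum $v_0$ at $s=0$, hence coincides with $v$ by $H^1$ uniqueness. As $v$ exists for all $s\geq 0$ and $u$ already exists on $[0,t_u^*]$, this forces $T=+\infty$, i.e. $u$ is global for positive times. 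Moreover the Airy equation enjoys the same invariances, which preserve $\|\cdot\|_{L^2}$, so $\tilde w(t,x):=(\lambda_u^*)^{-\frac12}w((\lambda_u^*)^{-3}(t-t_u^*),(\lambda_u^*)^{-1}(x-x_u^*))$ solves $\pa_t\tilde w+\tilde w_{xxx}=0$ and
\[
\|u(t)-\tilde w(t)\|_{L^2}=\big\|v\big((\lambda_u^*)^{-3}(t-t_u^*)\big)-w\big((\lambda_u^*)^{-3}(t-t_u^*)\big)\big\|_{L^2}\ \to\ 0 \quad\hbox{as } t\to+\infty,
\]
which is exactly the scattering of $u$.

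The argument is soft; the only genuinely delicate point is the bookkeeping of the small parameters. One must choose $\alpha^*$ first — thereby freezing $\tau^*$ and the scattering radius $r(\tau^*)$ — and only afterwards take $\alpha_0$ small enough that $\delta(\alpha_0)<r(\tau^*)$. One must also keep in mind that here the ``openness of scattering'' is invoked at the specific, non-small datum $S_{\rm KdV}(\tau^*)$ rather than near $0$; this is licit precisely because the hypothesis grants that this single solution scatters, so that the standard long-time perturbation theory underlying \cite{KPV2} applies in a neighborhood of it.
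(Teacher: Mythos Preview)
Your proof is correct and matches the paper's approach exactly: the paper does not give a formal proof but simply remarks, just before the corollary, that ``scattering is an open property in $L^2$ (see \cite{KPV2}),'' and your argument is precisely the careful unpacking of that one line together with the (Exit) conclusion of Theorem~\ref{th:3}. Your attention to the order in which the small parameters $\alpha^*$ and $\alpha_0$ are chosen, and to undoing the scaling/translation symmetries, fills in exactly the details the paper leaves implicit.
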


Related rigidity theorems near   solitary waves were also obtained by Nakanishi and Schlag \cite{NS1,NS2} and Krieger, Nakanishi and Schlag \cite{KNS}, for super critical wave and Schr\"odinger equations using the invariant set methods of Beresticky and Cazenave \cite{BeresCaze},   Kenig-Merle concentration compactness approach \cite{KM}, the classification of minimal dynamics \cite{DM2,DM}, and a further ``no return'' lemma in the (Exit) regime.
  In  the (Exit) regime, this lemma shows that the solution cannot come back close to solitons  and   in fact scatters. (In   critical situations, such an analysis is more delicate and incomplete, see \cite{KNS}.) Both the blow up statements and the no return lemma in \cite{NS1,NS2}, rely on a specific algebraic structure - the virial identity - which does not exist for (gKdV). 
The above results for (gKdV) rely on the {explicit} computation of the solution in the various regimes, and not on algebraic virial type identities. Indeed introducing the decomposition \eqref{decompkdv}, one shows  that at the leading order, $\l(t)$ satisfies  
\be
\label{vnoneoneo}
\l_{tt}=0, \ \ \l(0)=1.
\ee Roughly speaking, the three regimes (Exit), (Blow up), (Soliton)   correspond respectively to $\l_t(0)>0$, $\l_t(0)<0$ and the threshold dynamic $\l_t(0)=0$, see next section for details.\\ 

\medskip   
   
Now we produce a wide range of different blow up rates, including grow up in infinite time, for initial data $u_0\not \in \mathcal{A}$ having slow decay on the right. In particular,  the  blow up rate $\frac 1{(T-t)}$   is universal in $\mathcal A$ as a consequence of the leading order ODE \eqref{vnoneoneo} which can be justified only under the decay assumption $u_0\in \mathcal A$. The tail of slowly decaying data can interact with the solitary wave which   is moving to the right, which   lead in some cases to new exotic singular regimes.

\begin{theorem}[Exotic blow up rates, \cite{MMR3}]\label{th:4}\quad \\
{\rm (i)  Blow up in finite time:} for any $\nu > \frac {11}{13}$,  there exists $u \in  C((0,T_0),H^1)$   solution  of \eqref{kdv} blowing up at $t=0$ with
\be\label{th:4:1}
\|u_x(t)\|_{L^2} \sim t^{-\nu}\ \ \mbox{as}\ \ t\to 0^+.\ee
{\rm (ii)  Grow up in infinite time:} there exists  $u\in  C([T_0,+\infty),H^1)$ solution of \eqref{kdv} growing  up at $ +\infty$ with
\be\label{th:4:4}
\|u_x(t)\|_{L^2} \sim e^{t} \ \ \mbox{as}\ \ t\to +\infty.
\ee
For any $\nu>0$,  
there exists  $u\in  C([T_0,+\infty),H^1)$ solution of \eqref{kdv} growing  up at $ +\infty$ with
\be\label{th:4:5}
\|u_x(t)\|_{L^2} \sim t^{\nu}\ \ \mbox{as}\ \ t\to +\infty.
\ee
Moreover, such solutions can be taken arbitrarily close in $H^1$ to  solitons.\end{theorem}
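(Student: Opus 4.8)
The plan is to construct these exotic solutions by a bootstrap/modulation argument in which the slowly decaying tail on the right acts as a prescribed forcing term on the scaling parameter $\lambda(t)$. Writing the solution in the decomposition form \eqref{decompkdv}, $u(t,x) = \lambda^{-1/2}(t)(Q+\e)(t,(x-x(t))/\lambda(t))$, the standard modulation theory developed in \cite{MMjams,MMannals,MMR1} gives the system of modulation equations for $(\lambda,x)$ coupled to the PDE for $\e$. The key structural fact, recalled in the excerpt as \eqref{vnoneoneo}, is that in the regime of data in $\mathcal A$ one has $\lambda_{tt}=0$ at leading order; the point here is that when the data is \emph{not} in $\mathcal A$, i.e. has only slow polynomial decay on the right like $u_0^2(x)\sim x^{-2\theta}$ for a suitable $\theta$, the interaction of the moving soliton (which travels to the right at speed $\sim \lambda^{-2}$) with this tail produces a nonzero right-hand side: schematically $\lambda_{tt} \sim c\,\lambda^{a}$ with an exponent $a=a(\theta)$ that can be tuned by choosing $\theta$. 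One then solves this model ODE: for $\lambda_{tt}=c\lambda^a$ with $c<0$ one gets finite-time vanishing of $\lambda$ with $\lambda(t)\sim (T-t)^{\sigma}$ and hence $\|u_x\|_{L^2}\sim \lambda^{-1}\sim (T-t)^{-\sigma}$, covering the range \eqref{th:4:1}; for $c>0$ one gets grow-up with either polynomial rate \eqref{th:4:5} (power nonlinearity) or exponential rate \eqref{th:4:4} (the borderline $\lambda_{tt}\sim \lambda$, i.e. $a=1$, giving $\lambda(t)\sim e^{-t}$).

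The concrete steps I would carry out are the following. First, fix the target rate and translate it into a choice of asymptotic tail: choose an explicit slowly-decaying profile for $u_0$ on $\{x>0\}$ (e.g. matching a power $x^{-\theta}$ outside a large ball), with $\theta$ determined by the desired exponent $\nu$; the constraint $\nu>\tfrac{11}{13}$ should emerge precisely as the threshold beyond which such a tail is admissible while still allowing the perturbative estimates on $\e$ to close. Second, set up the bootstrap: assume on a time interval that $\e$ stays small in the adapted norm (the weighted $H^1$ norm with the exponential localization weight used in \cite{MMjmpa,MMannals,MMR1}), that $(\lambda,x)$ track the solution of the model ODE up to lower-order corrections, and that the tail of $u$ on the right is, to leading order, propagated essentially linearly by the Airy flow (the soliton being exponentially localized barely feels it, but the soliton's center $x(t)\to\infty$ marches into it). Third, run the energy/virial-type monotonicity estimates of \cite{MMannals,MMR1} on $\e$ to improve the bootstrap on $\e$; here one must carefully account for the extra source terms coming from the slowly decaying tail and check they are integrable in time against the chosen ODE regime. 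Fourth, close the modulation ODEs: show $\lambda(t)$, $x(t)$ converge to the model solution, obtaining the stated rates; in the grow-up cases one additionally checks global existence for $t\to+\infty$. Fifth, for the ``close to solitons'' statement, observe that the construction is robust under shrinking the size of the data perturbation — the tail can be taken with arbitrarily small $L^2$-mass-above-$Q$ while still dictating the asymptotics — so the resulting solutions lie in an arbitrarily small $H^1$-neighborhood of $Q$ (after rescaling/translating).

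The main obstacle, I expect, is making rigorous the heuristic ``the tail forces $\lambda_{tt}\sim c\lambda^a$'': one must quantify how the interaction integral $\int u\,(\text{soliton-localized weight})$ over the region where the moving soliton overlaps the tail feeds into the modulation equation for $\lambda_t$, and control this uniformly over the (long or blow-up) time interval. This is delicate because the soliton center $x(t)$ and the scale $\lambda(t)$ are themselves determined by the very equation one is trying to solve, so the estimate is genuinely self-referential and must be handled inside the bootstrap with margin to spare; moreover, in the finite-time blow-up case $\lambda\to 0$ one is simultaneously in a singular regime where the nonlinear interaction of $\e$ with $Q$ (the genuinely nonlinear, non-self-adjoint spectral structure of the linearized gKdV operator around $Q$) must be controlled with the sharp constants, exactly as in \cite{MMR1}. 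A secondary technical point is constructing suitable approximate solutions (refined profiles $Q_b$ or tail-adapted profiles) so that the error term $\e$ starts and remains small enough; and the exponential grow-up case $a=1$ is borderline and will require the sharpest form of the monotonicity argument since the time-integrated error must still be summable.
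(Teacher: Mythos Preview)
The paper is a survey and does not include a proof (or even a sketch of one) for this theorem; the result is simply cited from \cite{MMR3}, and the only indication of the argument is the one-paragraph commentary following the statement, to the effect that ``the grow up rate is directly related to the peculiar behavior of the initial data on the right'' and that ``other types of blow up speeds can be produced by changing the tail of the initial data.'' Your proposed strategy---that a slowly decaying tail on the right interacts with the advancing soliton so as to replace the leading-order law \eqref{vnoneoneo} by a forced scaling ODE whose solutions can be tuned to produce the desired rates, closed via a bootstrap built on the monotonicity machinery of \cite{MMR1}---is fully consistent with that commentary and is indeed the mechanism underlying \cite{MMR3}. Since the present paper contains no proof to compare against, a line-by-line assessment of your sketch (in particular the exact form of the forced ODE, the precise origin of the threshold $\nu>\tfrac{11}{13}$, and whether the tail is treated as an external Airy-propagated piece or built directly into the approximate profile $Q_b$) would require consulting \cite{MMR3} itself.
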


It follows from the proof of Theorem \ref{th:4} that the grow up rate is directly related to the peculiar behavior of the initial data on the right. In particular, other types of blow up speeds can be produced by changing the tail of the initial data. A   similar phenomenon was observed for global in time growing up solutions to the   energy critical harmonic heat flow by Gustafson, Nakanishi and Tsai \cite{NT2}. In \cite{NT2}, an explicit formula on the growth of the solution at infinity is given directly in terms of the initial data. Continuums of blow up rates were also observed in   pioneering works by Krieger, Schlag and Tataru \cite{KST,KST2} for   energy critical wave problems, see also Donninger and Krieger \cite{DK}.  We also refer to   stability    \cite{KT} and finite time blow up \cite{MRR} results for the energy critical Schr\"odinger map problem. All these results point out that the critical topology is not enough by itself to classify the flow near the ground state.


\section{An overview of the classification of the flow for (gKdV)}\label{sec:kdvproof}


Our aim in this section is to give a simple overview of the proof of the classification theorems for the (gKdV) flow, and in particular the rigidity property formally stated in \fref{vnoneoneo} which readily leads to the three scenarios in Theorem \ref{th:3}.\\

 \noindent{\bf Notation.} Let 
\be
\label{defL}
Lf=-f''+f-5Q^4f
\ee
be the linearized operator close to $Q$. We note the rescaling  operator:
$$f^{ \l }=\frac 1 {\lambda^{\frac 12}}  f \left( \frac{x }{\lambda}\right)$$ and   introduce the generator of $L^2$ scaling:
$$\Lambda f=-\frac{\pa f^{\l }}{\pa\lambda}_{|\l=1}=\frac12f+yf'.$$ The $L^2$ scalar product is denoted by $(f,g)=\int fgdx.$


\subsection{The approximate blow up profile}


The starting point of the analysis is to understand the leading order profile of the solution. The knowledge of the instability directions of the flow near $Q$ is essential and   adjustments of these directions lead to the scenarios of Theorem~\ref{th:3}.\\
Let us look for a solution to (gKdV) in the form of a renormalized bubble
\be\label{ans}u(t,x) =\frac 1 {\lambda^{\frac 12}(t)}  v  \left(s, \frac{x-x(t)}{\lambda(t)}\right),\ \ \frac{ds}{dt}=\frac1{\l^3}
\ee
which leads to the evolution equation: 
$$\pa_sv+(v_{yy}-v+v^5)_y+\left(\xsl-1\right)v_y-\lsl \Lambda v=0.$$ We freeze the modulation equations 
\be
\label{eqhiehefof}
\xsl-1=0, \ \ -\lsl=b
\ee
and we follow the strategy of construction of blow up solutions initiated in \cite{MRgafa}, \cite{RR2009}, \cite{RS2010}, looking for a slowly modulated profile $$v(s,y)=Q_{b(s)}(y), \ \ Q_b=Q+b P_1+b^2 P_2+\dots.$$ The problem    writes: how can we choose the law of the   parameter $b$ to ensure the solvability of the   system satisfied by $(P_i)_{i\ge 1}$? We   proceed the expansion $$b_s=-c_2b^2-c_3b^3+\dots, \ \ Q_b=Q+bP_1+b^2P_2+\dots, \ \ v(s,y)=Q_{b(s)}(y)$$ and aim at solving the following system by expanding in powers of $b$ 
\begin{align*}
0& = \pa_sv+(v_{yy}-v+v^5)_y+\left(\xsl-1\right)v_y-\lsl \Lambda v\\
&=b_s\frac{\partial Q_b}{\partial b}+((Q_b)_{yy}-Q_b+Q_b^5)_y+b\Lambda Q_b.
\end{align*}
\begin{itemize} 
\item Order $O(1)$  corresponds to the solitary wave equation: $$ (Q_{yy}-Q+Q^5)_y=0.$$
\item Order $O(b)$ provides the   equation 
\be
\label{ncieoneonoev} \Lambda Q+(LP_1)'=0
\ee where $L$ is  given by \fref{defL}. The translation invariance induces the existence of a (unique by ODE techniques) non trivial element of the kernel: $L Q'=0$. Hence   solvability of \fref{ncieoneonoev}   requires the cancellation $$(\Lambda Q,Q)=0$$ which indeed holds as a direct consequence of the $L^2$ critical invariance of the problem. However since $\int_{-\infty}^{+\infty} \Lambda Q\neq 0$, solving \fref{ncieoneonoev}  induces a non trivial growth of the solution $P_1$ to the right or to the left. Choose $P_1$   the unique solution to \fref{ncieoneonoev} which is exponentially well localized on the right $y\to +\infty$, but behaves like a non zero constant as $y\to -\infty$.
\item Order $O(b^2)$: we obtain an   equation of the form $$-c_2P_1+(LP_2+N(P_1))'=0$$ where $N(P_1)$ is some explicit nonlinear term. The solvability condition $$(-c_2P_1+(N(P_1))', Q)=0$$ leads after some computations to the choice $$c_2=2.$$
\end{itemize}
Therefore,  at the formal level, we obtain the following {\it universal} dynamical system driving the geometrical parameters: 
\be\label{simpl}
b_s=-2b^2, \ \ b=-\lsl, \ \ \xsl=1, \ \ \frac{ds}{dt}=\frac 1{\l^3},
\ee which   is equivalent to: 
\be
\label{systenqui}
\l_{tt}=0,\ \ x_t=\frac{1}{\l^2}, \ \ b=-\l^2\l_t.
\ee
By   scaling invariance, we may always choose $\lambda(0)=1$, and then the phase portrait of \fref{systenqui} is
\begin{itemize}
\item if $b(0):=b_0<0$, then $\lambda(t)=1-b_0t\to +\infty$ as $t\to +\infty$.
\item if $b_0=0$, then $\lambda(t)=1$ for all $t\geq 0$.
\item if $b_0>0$, then $\lambda (t)=1-b_0t$ vanishes at $T=\frac{1}{b_0}$ and   $\l(t)=b_0(T-t)$,   integrating from $T$.
\end{itemize}
The analysis is  now reduced to show that for initial data in $\mathcal A$, the above dynamical system governs the leading order solitonic part of the solution, so that the three  regimes of Theorem~\ref{th:3}  correspond to perturbations of the above three cases. Like for the finite dimensional system \fref{systenqui}, the first and third  regimes are stable, the second one is the unstable threshold dynamics.


\subsection{Nonlinear decomposition of the flow}


A solution $u(t,x)$ of \eqref{kdv} close in $H^1$ to a soliton is   decomposed as
$$
u(t,x) =\frac 1 {\lambda^{\frac 12}(t)} (Q_{b(t)} +\e) \left(s,y \right), \ \ y=\frac{x-x(t)}{\lambda(t)}, \ \ \frac{ds}{dt}=\frac{1}{\l^3(t)}$$ where $(s,y)$ are the renormalized space time variables. The parameters $(b(t),\l(t),x(t))$ which describe the drift along the soliton family are uniquely adjusted to obtain orthogonality conditions on $\e$ for all time:
\be\label{ortho}
( \e, Q)=(\e, \Lambda Q )=( \e, y \Lambda Q)=0.
\ee
This choice of orhogonality conditions is justified by Lemma \ref{tout} below.
The renormalized flow for $\e$ becomes:
\be\label{eqeps}
\e_s - (L \e)_y  = \left(  \lsl + b\right) \Lambda Q + \left(\xsl-1\right) Q' + \lsl \Lambda \e
+ O\left(b^2 + |b_s| + |\e|^2\right).
\ee
The dynamical system driving $(b,\l,x)$ is obtained from this equation and \eqref{ortho}. With respect to the idealized system \eqref{simpl}, it contains additional   perturbation terms coming from 
$b^2$, $b_s$ and $\e$. Typically, the construction of $Q_b$ and the decay assumption on the initial data (it is necessary at this point) ensure a bound of the form: \be
\label{contorlloca}
|b_s+2b^2|+\left|\lsl+b\right|\lesssim \int |\e|^2e^{-|y|}+O(b^3).
\ee
The remaining point to justify the dynamics is to obtain a uniform control of $\e(s)$ in   suitable norms whose choice  is essential.\\

Neglecting for the moment second order and higher order terms in the equation of $\e$, we concentrate on a toy model:
\be\label{eq:toy}
\widetilde \e_s - (L\widetilde \e)_y = \alpha(s) \Lambda Q + \beta(s) Q'
 \ee
for which   the following monotonicity formula can be proved:

\begin{lemma}\label{tout}
Let $\widetilde \e(s,y)$ be a solution of   \eqref{eq:toy} satisfying the orthogonality conditions   \eqref{ortho}. Then,
\begin{enumerate}
  \item Energy conservation at $\widetilde \e$ level:
  \begin{equation}\label{ener}\forall s,\quad (L\widetilde \e(s),\widetilde \e(s))=(L\widetilde \e(0),\widetilde \e(0)).  \end{equation}
   \item Virial estimate 
 \be\label{vir1} \frac d{ds} \int y \widetilde \e^2 = - H(\widetilde \e,\widetilde \e),  \ee
  where
  \begin{equation}\label{vir}
  H(\widetilde \e,\widetilde \e)= \int \left( 3 \widetilde \e_y^2 + \widetilde \e^2 - 5 Q^4 \widetilde \e^2 + 20 y Q' Q^3 \widetilde \e^2\right)\geq \mu_0 \|\widetilde \e(s)\|_{H^1}^2.
    \end{equation}
  \item Mixed Virial and energy-monotonicity estimate: for $B\gg 1$,   $\mu_1>0$,
  \begin{equation}\label{mixed} \frac d{ds} \left[\int    \widetilde \e_y^2 \psi\left(\frac yB\right)+  \widetilde \e^2 \varphi\left(\frac yB\right)- 5Q^4 \widetilde \e^2 \psi\left(\frac yB\right) \right] +\mu_1   \int   \left(  \widetilde \e_y^2  +  \widetilde \e^2\right) \varphi'\left(\frac yB\right)  \leq 0,  \end{equation}
  \begin{equation}\label{coer}
   \int   \widetilde \e_y^2 \psi\left(\frac yB\right)+  \widetilde \e^2 \varphi\left(\frac yB\right)- 5Q^4 \widetilde \e^2 \psi\left(\frac yB\right)   \geq \mu_1  \int    \widetilde \e_y^2 \psi\left(\frac yB\right)+  \widetilde \e^2 \varphi\left(\frac yB\right)   ,
    \end{equation}
  where the smooth functions $\varphi$ and $\psi$ satisfy
  \begin{align}\label{varphi}
 & \varphi(y) = \begin{cases} e^{y} \text{ for } y<-1,\\ 1+y \text{ for } -\frac 12<y<\frac 12 ,\quad
  \varphi'\geq 0 \text{ on } \RR,\\
  y \text{ for } y >1,   \end{cases}
    \\
  \label{psi}
 & \psi(y) = \begin{cases} e^{2y} \text{ for } y<-1,\\ 1 \text{ for } y>-\frac 12,\quad
  \psi'\geq 0 \text{ on } \RR.  \end{cases}
    \end{align}
   \end{enumerate}
  \end{lemma}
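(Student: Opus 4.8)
\textbf{Proof strategy for Lemma \ref{tout}.}

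The plan is to treat the three assertions in order, each relying on the spectral structure of $L$ and the fact that the orthogonality conditions \eqref{ortho} project $\widetilde\e$ away from the non-positive directions of $L$ and of the adjoint virial operator. For \eqref{ener}, I would simply test the toy equation \eqref{eq:toy} against $L\widetilde\e$ and integrate in $y$: the term $((L\widetilde\e)_y, L\widetilde\e)$ is a perfect derivative and vanishes, so that $\frac12\frac{d}{ds}(L\widetilde\e,\widetilde\e) = \alpha(s)(\Lambda Q, L\widetilde\e) + \beta(s)(Q', L\widetilde\e)$; since $LQ'=0$ the second pairing is zero, and for the first one I would use $L\Lambda Q = -2Q$ (a standard identity following from differentiating the soliton equation in the scaling parameter, together with $L^2$-criticality) to rewrite $(\Lambda Q, L\widetilde\e) = -2(Q,\widetilde\e) = 0$ by the first orthogonality condition. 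Hence the right-hand side vanishes identically and \eqref{ener} follows.

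For the virial identity \eqref{vir1}, I would multiply \eqref{eq:toy} by $2y\widetilde\e$ and integrate, which after integration by parts produces $\frac{d}{ds}\int y\widetilde\e^2 = 2\int y\widetilde\e(L\widetilde\e)_y + 2\alpha(s)\int y\widetilde\e\,\Lambda Q + 2\beta(s)\int y\widetilde\e\,Q'$. The cross terms cancel because of the orthogonality conditions $(\e,y\Lambda Q)=0$ and $(\e, y Q')$: here I note $yQ' = \Lambda Q - \tfrac12 Q$ so $(\widetilde\e, yQ')=0$ follows from $(\widetilde\e,\Lambda Q)=(\widetilde\e,Q)=0$. Integrating by parts in the main term and using the explicit form of $L$ gives exactly $-H(\widetilde\e,\widetilde\e)$ with $H$ as in \eqref{vir}. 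The coercivity $H(\widetilde\e,\widetilde\e)\geq \mu_0\|\widetilde\e\|_{H^1}^2$ is the genuine analytic input: $H$ is a quadratic form which is manifestly coercive at high frequency (the $3\widetilde\e_y^2$ term) but has a finite-dimensional negative/null subspace spanned essentially by $Q$, $\Lambda Q$ and $y\Lambda Q$; one checks by an explicit spectral computation — using the known eigenfunctions of $L$ and the fact that the added weight $20yQ'Q^3$ is exponentially localized — that on the codimension-three subspace cut out by \eqref{ortho} the form is strictly positive, and a standard compactness/continuity argument upgrades this to a uniform lower bound $\mu_0>0$.

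For the mixed estimate \eqref{mixed}, the idea is to localize the energy functional with the weights $\varphi(y/B)$, $\psi(y/B)$ adapted to the Airy flow: I would differentiate $\mathcal F_B(\widetilde\e) := \int \widetilde\e_y^2\psi(\tfrac yB) + \widetilde\e^2\varphi(\tfrac yB) - 5Q^4\widetilde\e^2\psi(\tfrac yB)$ along \eqref{eq:toy}. The transport structure of $(L\widetilde\e)_y$ against the monotone weights $\psi'\geq 0$, $\varphi'\geq 0$ produces, via integration by parts, a favorable bulk term $-\mu_1\int(\widetilde\e_y^2+\widetilde\e^2)\varphi'(\tfrac yB)$ plus error terms of size $O(1/B)$ supported where $\varphi'\sim 1$, which are absorbed for $B$ large; the contributions of $\alpha(s)\Lambda Q$ and $\beta(s)Q'$ are controlled because $\Lambda Q$, $Q'$ are exponentially localized near $y=0$ where the weights and their derivatives are bounded, and because the modulation parameters $\alpha,\beta$ are themselves quadratically small by \eqref{contorlloca}. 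The coercivity \eqref{coer} of the localized functional follows by splitting into the region $y\lesssim -B$ (where $Q^4$ is exponentially small and the form is trivially positive) and the region $y\gtrsim -B$, where one runs a localized version of the spectral argument used for $H$, again exploiting the orthogonality conditions. \textbf{The main obstacle} is precisely this last point: making the two coercivity statements \eqref{vir} and \eqref{coer} uniform requires a careful spectral analysis of $L$ together with the localized weights — showing that the finitely many bad directions of the unweighted problem are exactly neutralized by \eqref{ortho}, and that localization does not create new negative directions — which is where the explicit formula $Q(x)=3^{1/4}\cosh^{-1/2}(2x)$ and the resulting exact spectral data for $L$ are essential.
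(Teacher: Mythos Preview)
Your outline matches the paper's own (brief) proof: \eqref{ener} and \eqref{vir1} by direct testing together with the algebra $L\Lambda Q=-2Q$, $LQ'=0$; the coercivity of $H$ quoted from \cite{MMjmpa}; and \eqref{mixed}--\eqref{coer} by localization combined with the spectral coercivity of $L$ under \eqref{ortho}. Two small corrections: first, in the toy model the control on $\alpha,\beta$ does not come from \eqref{contorlloca} (that estimate belongs to the full nonlinear problem) but from differentiating the orthogonality conditions \eqref{ortho} in $s$, which determines $\alpha,\beta$ in terms of local norms of $\widetilde\e$ --- combined with the exponential localization of $\Lambda Q,Q'$ this makes their contribution to \eqref{mixed} of size $O(e^{-cB})\|\widetilde\e\|_{\rm loc}^2$, hence absorbable; second, the paper obtains the favorable bulk term in \eqref{mixed} precisely by recognizing it as a localized version of the virial form $H$ and invoking the coercivity \eqref{vir}, which is the cleanest way to close that step and is worth making explicit in your write-up.
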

\begin{proof}
Identities \eqref{ener} and \eqref{vir1} are obtained by direct computations from the equation of $\e$ and classical properties of $L$ (see \cite{W1983}).
The coercivity of $H(\e,\e)$ is proved in \cite{MMjmpa}.
Estimate \eqref{mixed} follows  from direct computations and estimates, and   \eqref{vir}
applied to a suitable localization of $\e$. It  combines in a sharp way
 monotonicity arguments from \cite{MMjmpa} (reminiscent of the Kato smoothing effect \cite{Kato}) and localized   Virial estimates.
The coercivity property \eqref{coer} is a consequence of well-known properties of the operator $L$:  for $\mu>0$,
$$
(\e,Q)=(\e,y\Lambda Q)=(\e,\Lambda Q)=0 \quad \Rightarrow \quad (L\e,\e) \geq \mu \|\e\|_{H^1}^2,
$$
(see \cite{W1983}),
and localization arguments. 
\end{proof}


\subsection{The energy/virial Lyapunov functional}

  
  The mixed energy-Morawetz estimate \fref{mixed} provides both a pointwise control of the boundary term and a space time control of some local norm. At the nonlinear level, this kind of tools may be   delicate to handle due to   localization in space, the main difficulty being to control the nonlinear terms using the sole weighted norms present in \fref{mixed}. This is a   well-known   problem, and the choice of the weights $(\varphi,\psi)$ is here essential. The presence of the drift operator $\lsl\Lambda \e$ in the right-hand side of \fref{eqeps} is an additional difficulty for space localization, which requires the assumption of space decay on the right, i.e. $u_0\in \mathcal A$. For blow up problems, such a strategy based on   mixed energy-virial estimates to control the residual term $\e$ was already used  in \cite{RR2009}, \cite{RS2010} but in a setting where localization in space is   simpler to handle. For (gKdV), the   strategy   developed in \cite{MMR1}  goes as follows.\\
  
  For $B\gg 1$ fixed, define the weighted norms
$$
\mathcal{N}(s)=\int      \e_y^2 \psi\left(\frac yB\right)+    \e^2 \varphi\left(\frac yB\right) , 
$$
and the   nonlinear functional:
$$
\mathcal{F}_i(s) = \int      \e_y^2 \psi\left(\frac yB\right)+    \e^2 (1+\mathcal{J}_i)  \varphi\left(\frac yB\right)- \frac 13 \left( (\e +Q_b)^6 - Q_b^6 - 6 \e Q_b^5\right) \psi\left(\frac yB\right),
$$
where we introduced the following nonlinear lower order corrections\footnote{which should be ignored at first hand.}
$$
\mathcal{J}_i = ( 1-J_1)^{-4i} -1,\quad J_1=(\e,\rho_1),\quad 
\rho_1(y) = \frac 4{\left( \int Q\right)^2}  \int_{-\infty}^y \Lambda Q.
$$
Then, we obtain the following Lyapunov monotonicity which takes the form of a bootstrap bound\footnote{in order to control the nonlinear term.}:

\begin{proposition}[\cite{MMR1}]\label{propasymp}
Assume   on some interval $[0,s_0]$,
\\
{\em (H1)   smallness:}
\be
\label{boundnwe}
\|\e(s)\|_{L^2}+|b(s)|+\mathcal N(s)\leq \kappa^*;
\ee
{\em (H2) comparison between $b$ and $\l$:}
\be
\label{bootassumption}
 \frac{|b(s)|+\mathcal N(s)}{\lambda^2(s)}\leq \kappa^*;
\ee
{\em (H3) $L^2$ weighted bound on the right:}
\be
\label{uniformcontrol}
\int_{y>0}y^{10}\e^2(s,x)dx\leq 10\left(1+\frac{1}{\l^{10}(s)}\right).
\ee
 Then   the following bounds hold on $[0,s_0]$: for $B\gg 1$, $\mu>0$,\\
 {\em (i) Scaling  invariant Lyapunov control:} 
\be
\label{lyapounovconrol}
\frac{d}{ds}  {\mathcal F}_1 +    \mu  \int  \left(\e_y^2 + \e^2\right) \varphi'\left(\frac yB\right) \lesssim  {|b|^{4}} .
\ee
{\em (ii) Scaling weighted $H^1$ Lyapunov control:} 
\be
\label{lyapounovconrolbis}
\frac{d}{ds}\left\{\frac{{\mathcal F}_2}{\l^2}\right\}+  \frac \mu {\l^2}   \int  \left(\e_y^2 + \e^2\right) \varphi'\left(\frac yB\right) \lesssim \frac{|b|^{4}}{\l^2}.
\ee
{\em (iii) Pointwise bounds:}  
\be
\label{controlj}
|J_1|+|J_2|\lesssim \mathcal N^{\frac{1}{2}} ,
\ee
$$\mathcal N  \lesssim \mathcal F_{j}\lesssim \mathcal N  , \ \ j=1, 2.
$$
\end{proposition}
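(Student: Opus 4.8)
The plan is to treat all three items of Proposition~\ref{propasymp} as a consequence of the monotonicity formula in Lemma~\ref{tout}, applied not to the toy model but to the full $\e$-equation \eqref{eqeps}, carefully tracking the nonlinear and forcing terms. The guiding principle is that $\mathcal F_1$ is nothing but the quadratic functional appearing in the bracket of \eqref{mixed}, corrected by two lower-order nonlinear terms: the genuine nonlinearity $-\frac13((\e+Q_b)^6-Q_b^6-6\e Q_b^5)\psi$, which restores the conservation-law structure beyond the quadratic level, and the scalar corrections $\mathcal J_i\e^2\varphi$, whose sole role is to absorb the worst contribution of the modulation parameter $b_s$ via the identity for $\frac{d}{ds}J_1$ coming from \eqref{eqeps} and \eqref{contorlloca}. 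So the first step is to write $\frac{d}{ds}\mathcal F_1$, organize the terms into (a) the good quadratic dissipative term already controlled by \eqref{mixed}/\eqref{coer}, giving $+\mu\int(\e_y^2+\e^2)\varphi'(\cdot/B)$ on the left; (b) forcing terms proportional to $(\lambda_s/\lambda+b)$ and $(x_s/\lambda-1)$, which by the modulation bounds \eqref{contorlloca} are $O(|b|^4 + \mathcal N\cdot(\text{small}))$ after using \eqref{controlj} and Cauchy--Schwarz; (c) nonlinear remainders, estimated in the weighted norm $\mathcal N$ using the smallness assumptions (H1)--(H3) together with pointwise decay of $Q_b$ and $\rho_1$; and (d) the drift term $\frac{\lambda_s}{\lambda}\Lambda\e$, whose localization is controlled precisely because of the right-decay assumption (H3) and the choice of $\varphi,\psi$ growing at most linearly on the right. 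Collecting these yields \eqref{lyapounovconrol}.

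For the pointwise bounds (iii), the estimates $|J_1|+|J_2|\lesssim\mathcal N^{1/2}$ follow immediately from Cauchy--Schwarz since $\rho_1$ and the analogous weight $\rho_2$ are exponentially localized on the left and bounded on the right, hence lie in the dual of the $\mathcal N$-norm; then $|\mathcal J_i|\lesssim\mathcal N^{1/2}\ll 1$ under (H1), and the equivalence $\mathcal N\lesssim\mathcal F_j\lesssim\mathcal N$ is obtained by writing $\mathcal F_j=\mathcal N + (\mathcal J_j\text{-term}) + (\text{genuine nonlinearity})$ and bounding the last two by $o(1)\mathcal N$ using the coercivity \eqref{coer} to reconstruct the $-5Q^4\e^2\psi$ piece and Sobolev/pointwise estimates on the degree-$\ge 3$ nonlinear terms, absorbed by smallness. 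This should be done \emph{before} (i)--(ii), since the equivalence $\mathcal F_j\simeq\mathcal N$ is what makes the differential inequalities meaningful as bootstrap estimates on $\mathcal N$.

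The scaling-weighted version \eqref{lyapounovconrolbis} is then essentially algebraic once \eqref{lyapounovconrol} is known: compute $\frac{d}{ds}\{\mathcal F_2/\lambda^2\} = \lambda^{-2}\frac{d}{ds}\mathcal F_2 - 2\frac{\lambda_s}{\lambda}\lambda^{-2}\mathcal F_2$; the first term is handled as in (i) (with $\mathcal F_2$ carrying the extra weight, which changes no structural feature), and the second term is $-2\frac{\lambda_s}{\lambda}\lambda^{-2}\mathcal F_2 = 2b\lambda^{-2}\mathcal F_2 + O\!\big((\lambda_s/\lambda+b)\lambda^{-2}\mathcal N\big)$. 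The point is that $2b\lambda^{-2}\mathcal F_2$ has a \emph{favorable sign} in the blow-up regime where $b>0$ — or more precisely is dominated by the dissipative term after using (H2) to compare $|b|$ with $\lambda^2$ — so it can be absorbed into the $\mu\lambda^{-2}\int(\e_y^2+\e^2)\varphi'$ term, leaving the clean right-hand side $|b|^4/\lambda^2$. This is the standard mechanism by which the weighted functional captures the blow-up dynamics.

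The main obstacle, and the technical heart of the argument, is step (c)--(d) of the first paragraph: controlling the nonlinear interaction terms and the drift term $\frac{\lambda_s}{\lambda}\Lambda\e$ using \emph{only} the weighted norm $\mathcal N$ and its dissipative companion $\int(\e_y^2+\e^2)\varphi'(\cdot/B)$, with a gain of a power of $B$ where needed. Unlike in \cite{RR2009,RS2010}, the first-order (transport) structure of (gKdV) means the localized virial and Kato-smoothing contributions must be balanced extremely sharply against each other — this is exactly what forces the particular definition of $\varphi,\psi$ in \eqref{varphi}--\eqref{psi} (linear growth on the right, exponential decay on the left) and the decay hypothesis (H3): without right-decay the term $\int y\e^2$-type quantities entering through $\Lambda\e$ are not controlled. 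I expect that making the constants work — choosing $B$ large enough that the good terms dominate, yet fixed before the bootstrap constant $\kappa^*$ is sent to zero — will require the delicate bookkeeping that is carried out in \cite{MMR1}, and that all the genuinely new difficulties relative to the (NLS) analysis are concentrated here.
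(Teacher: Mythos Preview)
The paper is a survey and does not itself prove Proposition~\ref{propasymp}; it states the result and attributes the proof to \cite{MMR1}. So there is no proof in the paper to compare against line by line. That said, the surrounding discussion (Lemma~\ref{tout}, the definition of $\mathcal F_i$, the remark that the $\mathcal J_i$ are lower-order corrections) makes clear what the intended architecture is, and your sketch matches it well for parts (i) and (iii): the coercivity/Cauchy--Schwarz argument for (iii) is exactly right, and the organization of $\frac{d}{ds}\mathcal F_1$ into dissipative, forcing, nonlinear, and drift pieces is the correct template.

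There is, however, a genuine gap in your treatment of (ii). You write that the extra term $2b\lambda^{-2}\mathcal F_2$ arising from $\frac{d}{ds}\lambda^{-2}$ ``has a favorable sign in the blow-up regime where $b>0$'', or alternatively ``is dominated by the dissipative term after using (H2)''. Both claims fail. In the blow-up regime $b>0$ and $\mathcal F_2>0$, so $2b\lambda^{-2}\mathcal F_2>0$ is an \emph{unfavorable} contribution to the left-hand side of \eqref{lyapounovconrolbis}. And the fallback via (H2) gives only $|2b\lambda^{-2}\mathcal F_2|\le 2\kappa^*\mathcal F_2\sim 2\kappa^*\mathcal N$, which cannot be absorbed by $\mu\int(\e_y^2+\e^2)\varphi'(\cdot/B)$: the dissipative norm carries the bounded weight $\varphi'$, whereas $\mathcal N$ carries $\varphi$, which grows linearly on the right. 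Nor is $|b|\mathcal N\lesssim|b|^4$ available under (H1)--(H3).

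The actual mechanism is encoded in the \emph{difference} between $\mathcal J_2$ and $\mathcal J_1$ --- the exponents $-8$ versus $-4$ are not accidental. The extra factor $(1-J_1)^{-4}$ in $\mathcal F_2$ is chosen so that, when one differentiates in $s$ and tracks the scaling/drift contribution through $\frac{dJ_1}{ds}$ (using the orthogonality $(\e,y\Lambda Q)=0$ and the algebra $(\Lambda\e,\rho_1)=-\tfrac12 J_1$), a term appears in $\frac{d}{ds}\mathcal F_2$ that exactly compensates the $2b\mathcal F_2$ coming from the $\lambda^{-2}$ weight. In other words, (ii) is \emph{not} obtained from (i) by an algebraic manipulation plus a sign observation; it requires re-running the full computation with the modified correction $\mathcal J_2$, and the specific power is dictated by the cancellation one needs. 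Your sketch treats $\mathcal F_2$ as structurally identical to $\mathcal F_1$ (``which changes no structural feature''), and that is precisely where the argument breaks.
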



\subsection{Rigidity and selection of the dynamics}


Note that estimate \fref{lyapounovconrolbis} controls the radiation $\e$ independently of   the dynamics   and thus is valid in all possible regimes like (Blow up), (Soliton) or (Exit). In particular,   estimate \fref{lyapounovconrolbis} completely reduces the control of $\e$ to the sole control of the parameter $b$. Combining  this estimate with the finite dimensional evolution equation \fref{contorlloca}   leads to the following rigidity formula for $b$:

\begin{lemma}[Control of the dynamics for $b$, \cite{MMR1}]\label{le:2.7}
Under   assumptions {\rm (H1)-(H2)-(H3)} of Proposition~\ref{propasymp}, 
for all $  0\leq s_1\leq s_2< s_0$,
\be
\label{conrolbintegre}
\left|\frac{b(s_2)}{\lambda^2(s_2)}-\frac{b(s_1)}{\lambda^2(s_1)}\right|\leq
\frac {C^*}{10}\left[\frac{b^2(s_1)}{\lambda^2(s_1)}+\frac{b^2(s_2)}{\l^2(s_2)}+ \frac{\matchal N(s_1)}{\lambda^2(s_1)}\right]
\ee
for some universal constant $C^*>0$.
\end{lemma}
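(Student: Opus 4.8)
The plan is to derive \eqref{conrolbintegre} by integrating the scalar ODE \eqref{contorlloca} for $b$ against the correct integrating factor, using \eqref{lyapounovconrolbis} to absorb the $\e$-contribution. First I would observe that the natural quantity to track is not $b$ itself but $\tfrac{b}{\l^2}$, precisely because the idealized system \eqref{simpl}-\eqref{systenqui} says $\l_{tt}=0$, i.e.\ $\frac{d}{ds}\left(\frac{b}{\l^2}\right)=0$ at the leading order. Concretely, from $-\lsl = b + O(\ldots)$ and $b_s = -2b^2 + O(\ldots)$ one computes
\[
\frac{d}{ds}\left(\frac{b}{\l^2}\right) = \frac{b_s}{\l^2} - 2\frac{b}{\l^2}\cdot\frac{\l_s}{\l} = \frac{b_s + 2b^2}{\l^2} + \frac{2b}{\l^2}\left(\frac{\l_s}{\l}+b\right),
\]
so that by \eqref{contorlloca},
\[
\left|\frac{d}{ds}\left(\frac{b}{\l^2}\right)\right| \lesssim \frac{1}{\l^2}\left(\int|\e|^2 e^{-|y|} + |b|^3\right).
\]
Integrating between $s_1$ and $s_2$ then gives
\[
\left|\frac{b(s_2)}{\l^2(s_2)} - \frac{b(s_1)}{\l^2(s_1)}\right| \lesssim \int_{s_1}^{s_2}\frac{1}{\l^2}\left(\int|\e|^2 e^{-|y|}\right)ds + \int_{s_1}^{s_2}\frac{|b|^3}{\l^2}ds.
\]

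The key step is then to bound the two time-integrals on the right by the quadratic expression appearing in \eqref{conrolbintegre}. For the $\e$-term: since $e^{-|y|}\lesssim \varphi'\!\left(\tfrac yB\right)$ on $\RR$ (up to the constant $B$, recalling $\varphi'$ equals $e^y$ on $y<-1$ and $1$ near the origin and stays bounded below on compact sets), the time-integral $\int_{s_1}^{s_2}\l^{-2}\int|\e|^2 e^{-|y|}$ is controlled by $\int_{s_1}^{s_2}\l^{-2}\int(\e_y^2+\e^2)\varphi'\!\left(\tfrac yB\right)$, which by \eqref{lyapounovconrolbis} is bounded by $\tfrac{\mathcal F_2(s_1)}{\l^2(s_1)} - \tfrac{\mathcal F_2(s_2)}{\l^2(s_2)} + \int_{s_1}^{s_2}\tfrac{|b|^4}{\l^2}$; using the equivalence $\mathcal N \lesssim \mathcal F_2 \lesssim \mathcal N$ from \eqref{controlj} and dropping the (good-sign, or lower-order) terms, this is $\lesssim \tfrac{\mathcal N(s_1)}{\l^2(s_1)}$ plus an $O(|b|^4)$ tail. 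For the $|b|^3$ (and leftover $|b|^4$) time-integrals: using the monotonicity of $\l$ in each of the three regimes (or more simply the a priori bound $\tfrac{|b|+\mathcal N}{\l^2}\le \kappa^*$ from (H2) together with $b_s\sim -2b^2$, so $b$ decays and $\int |b|\,ds$ is geometrically small relative to the endpoint values), one shows $\int_{s_1}^{s_2}\tfrac{|b|^3}{\l^2}ds \lesssim \kappa^*\!\left(\tfrac{b^2(s_1)}{\l^2(s_1)} + \tfrac{b^2(s_2)}{\l^2(s_2)}\right)$, with the small constant $\kappa^*$ ensuring the final constant can be taken as $C^*/10$.

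The main obstacle I anticipate is the careful bookkeeping of the time-integral $\int_{s_1}^{s_2}\tfrac{|b|^3}{\l^2}\,ds$ and its interplay with the sign of $\l_s/\l$: one really needs to exploit that in the relevant regimes $\l$ is monotone (or nearly so) so that $\l^{-2}$ does not grow uncontrollably between $s_1$ and $s_2$, and to convert the time-integral of $|b|$ into boundary values using $b_s \approx -2b^2$, i.e.\ $\frac{d}{ds}(1/b)\approx 2$, which makes $|b(s)|$ comparable to a quantity like $|b(s_i)|/(1+|b(s_i)| (s-s_i))$. Matching the resulting constants so that the total is exactly $\tfrac{C^*}{10}$ times the bracketed quantity — rather than a generic $O(\cdot)$ — is where the smallness $\kappa^*$ must be used decisively. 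Everything else (the differentiation of $b/\l^2$, the pointwise comparison $e^{-|y|}\lesssim\varphi'$, and invoking \eqref{lyapounovconrolbis} and \eqref{controlj}) is routine given the results already established in Proposition~\ref{propasymp}.
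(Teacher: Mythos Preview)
Your approach is correct and is precisely the one the paper indicates: differentiate $b/\l^2$, bound the result via \eqref{contorlloca}, and absorb the time-integrated $\e$-contribution through the Lyapunov estimate \eqref{lyapounovconrolbis} together with the equivalence $\mathcal F_2\sim\mathcal N$. The survey itself gives no further detail beyond ``combining \eqref{lyapounovconrolbis} with \eqref{contorlloca}'', so your write-up is in fact more explicit than what appears here.

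Two minor remarks. First, there is a harmless sign slip in your displayed identity: the correct decomposition is
\[
\frac{d}{ds}\left(\frac{b}{\l^2}\right)=\frac{b_s+2b^2}{\l^2}-\frac{2b}{\l^2}\left(\lsl+b\right),
\]
which of course does not affect the absolute-value bound. Second, for the term $\int_{s_1}^{s_2}\frac{|b|^3}{\l^2}\,ds$ you can avoid invoking monotonicity of $\l$ ``in each regime'' by instead differentiating $b^2/\l^2$: the same modulation estimates \eqref{contorlloca} give $\frac{d}{ds}\bigl(\frac{b^2}{\l^2}\bigr)=-\frac{2b^3}{\l^2}+O\!\bigl(\frac{|b|}{\l^2}\int|\e|^2e^{-|y|}+\frac{|b|^4}{\l^2}\bigr)$, so that (on any interval where $b$ keeps a sign, and using (H1) to absorb the $|b|^4$ remainder) the cubic integral is directly converted into the boundary terms $\frac{b^2(s_i)}{\l^2(s_i)}$ plus a contribution already controlled by the $\e$-estimate. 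This is cleaner than the $b_s\approx-2b^2\Rightarrow \frac{d}{ds}(1/b)\approx 2$ route you sketch, and yields the smallness needed for the constant $C^*/10$ without case analysis.
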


The three scenarios of Theorem \ref{th:3} are   a direct consequence of \fref{conrolbintegre}. Indeed, there are only two possibilities. \\

- Either there exists a time $s_1$ such that at $s_1$, $|b(s_1)|$ dominates $\e$ in the sense that 
\be
\label{event}
\frac{|b(s_1)|}{\l^2(s_1)}\gg \frac{\mathcal N(s_1)}{\l^2(s_1)}.
\ee 
Then from \fref{conrolbintegre} and the a priori bound \fref{boundnwe}-\fref{bootassumption},  this property is propagated at later times and:$$\forall s_2\ge s_1, \ \ \frac{b(s_2)}{\lambda^2(s_2)}\sim\frac{b(s_1)}{\lambda^2(s_1)}=c_0\neq 0.$$ Thus, from \fref{contorlloca}: $$\frac{b}{\l^2}\sim \frac{-\lambda_s}{\l^3}=-\l_t\sim c_0.$$ If $c_0>0$, $\l$ vanishes  in finite time $T$, $\l(t)\sim c_0 (T-t)$ and  this corresponds to (Blow up). If $c_0<0$, $\l$ is growing and so is $b$ so that $u(t)$ is moving away in $L^2$ from the solitary wave, which  is the (Exit) case. Observe that \fref{event} is an open condition on the data, and hence both these regimes are stable. \\

- Or this time $s_1$ does not exist, which means that $$\forall s_1, \ \ \frac{|b(s_1)|}{\l^2(s_1)}\ll\frac{\matchal N(s_1)}{\l^2(s_1)}\ \ \mbox{i.e.}\ \ |b(s_1)|\lesssim \matchal N(s_1).$$ Then the space time bounds on $\e$ eventually lead to the following estimate $$\int_0^{+\infty}\left|\lsl\right|\lesssim\int_0^{+\infty}|b(s)|ds\lesssim \int_0^{+\infty}\matchal N(s)ds<+\infty$$ and thus $$\l(s)\to \l_{\infty}>0\ \ \mbox{as}\ \ s\to +\infty.$$ This is the (Soliton) dynamics which is a threshold regime.


\subsection{Construction of the minimal mass blow up solution}


The construction of the minimal mass solution and the determination of the universal behavior of solutions in the (Exit) regime   follow the same compactness strategy. The minimal blow up solution in Theorem~\ref{th:2} is obtained as the limit of sequences of {\it defocusing} solutions. Indeed, we pick a sequence of {\it well-prepared} initial data $$u_n(0)=Q_{b_n(0)}, \ \ b_n(0)=-\frac{1}{n}$$ which by construction have subcritical mass $$\|u_n(0)\|_{L^2}-\|Q\|_{L^2}\sim \frac{c}{n}.$$ Such solutions are necessarily in the (Exit) regime of Theorem \ref{th:3} and we denote by $t_n^*$ the corresponding exit time. Moreover, we have from \cite{MMR1} (see also the formal discussion of Section~3) a precise description of the flow in the time interval $ [0,t^*_n]$; in particular, we know that  the solution admits a decomposition 
\be
\label{cneoneoncoe}
u_n(t,x)=\frac{1}{\l_n^{\frac 12}(t)}(Q_{b_n(t)}+\e_n)\left(t,\frac{x-x_n(t)}{\l_n(t)}\right)
\ee where, at the leading order, $(b_n,\l_n)$ behaves as follows $$\frac{b_n(t)}{\l^2_n(t)}\sim b_n(0)=-\frac{1}{n}, \ \ (\lambda_n)_t\sim- b_n(0),$$ 
\be
\label{cnekneneo}
\lambda_n(t)\sim 1-b_n(0)t, \ \ b_n(t)\sim b_n(0)\l_n^2(t).
\ee
The (Exit) time $t_n^*$ is the one for which the solution moves strictly away from the solitary wave which in our setting is equivalent to $$b_n(t_n^*)\sim-\alpha^*,$$ with $\alpha^*$ independent of $n$. This  allows us to compute $t_n^*$ and show using \fref{cnekneneo} that the solution defocuses: $$\l^2_n(t_n^*)\sim \frac{b_n(t_n^*)}{b_n(0)}\sim n\alpha^* \ \  \mbox{as}\ \ n\to +\infty.$$ 

Next, we renormalize the flow at $t_n^*$, considering the solution of (gKdV) defined by   $$v_n(\tau,x)=\l_n^{\frac12}(t_n^*)u_n(t_\tau,\l_n(t_n^*)x+x_n(t_n^*)),\ \ t_{\tau}=t_n^*+\tau \l_n^3(t_n^*).$$ From direct computations, $v_n$ admits a decomposition $$v_n(\tau,x)=\frac{1}{\l_{v_n}^{\frac 12}(\tau)}(Q_{b_{v_n}}+\e_{v_n})\left(\tau,\frac{x-x_{v_n(\tau)}}{\lambda_{v_n(\tau)}}\right)$$ with from the symmetries of the flow 
$$\l_{v_n}(\tau) = \frac {\l_n(t_\tau)}{\l_n(t_n^*)},  \ \ 
x_{v_n}(\tau)= \frac  {x_n(t_\tau)-x_n(t_n)}{\l_n(t_n^*)} ,\ \ 
b_{v_n}(\tau) = b_n(t_\tau),\ \ 
\e_{v_n}(\tau) = \e_n(t_\tau).
$$
The renormalized parameters can be computed at the main order using \fref{cnekneneo}:
\bee
\l_{v_n}(\tau) & \sim &   \frac{1}{\l_n(t_n^*)}\left[1-b_n(0)(t_n^*+\tau\l_n^3(t_n^*))\right]\\
& \sim & \frac{1}{\l_n(t_n^*)}\left[\l_n(t_n^*)-\tau b_n(0)\l_n^3(t_n^*)\right]  
 \sim   1-\tau b_n(t_n^*)\sim1+\tau\alpha^*.
\eee
Observe that the law of $\l_{v_n}(\tau)$ at this order does not depend on $n$, which is a  remarkable and  decisive property in this approach.
Letting $n\to +\infty$, we   extract a weak limit in $H^1$ $v_n(0)\rightharpoonup v(0)$ such that the corresponding solution $v(\tau)$ to (gKdV)   blows up backwards at some finite time $\tau^*\sim -\frac{1}{\alpha^*}$ with the blow up speed $\lambda_v(\tau)\sim \tau-\tau^*$ as expected. 
Note that the extraction of the weak limit   requires uniform estimates on the residual radiation $\e_{v_n}$. Here it is essential that the set of data $u_n(0)$ is {\it well-prepared}, as this implies uniform bounds for $\e_{v_n}(0)=\e_{u_n}(t_n^*)$ in $H^1$ and allows us to use the $H^1$ weak continuity of the flow in the limiting process.
Finally,  by the weak convergence, one obtains $\|v\|_{L^2}\leq \|Q\|_{L^2}$, but since the solution $v(\tau)$ blows up in finite time, $\|v\|_{L^2}=\|Q\|_{L^2}$.\\

For uniqueness, we refer the reader to  \cite{MMR2}.

\subsection{Solutions in the (Exit) regime.} Now, we prove the universality of $S_{\rm KdV}$ as an attractor in the (Exit) case. For this, we consider a sequence  of data $(u_0)_n$ with $\|(u_0)_n\|_{L^2}\to \|Q\|_{L^2}$ as $n\to +\infty$ such that the corresponding solution to (gKdV) is in the (Exit) regime. We write   the solution at the (Exit) time in the form \fref{cneoneoncoe}, renormalize the flow and   aim at extracting a weak limit as $n\to +\infty$ as before. The strategy  of the proof is similar to the construction of the minimal mass solution, except that since the data is not well-prepared, no uniform $H^1$ bound on $v_n(0)$ can be obtained. To get around, we use two additional ingredients:
\begin{enumerate}
\item a concentration compactness argument on sequences of solutions in the critical $L^2$ space in the spirit of \cite{KM} using the tools developed in \cite{Shao} and \cite{KKSV} for the Airy group. This allows us to extract a non trivial weak limit with suitable dynamical controls; 

\item refined {\it local} $H^1$ bounds on $v_n(\tau)$ in order to ensure that the $L^2$ limit of this sequence actually belongs to $H^1$. 
\end{enumerate}
Hence the weak limit is a minimal mass $H^1$ blow up element, and by the uniqueness statement of Theorem \ref{th:2}, the limit is $S_{\rm KdV}$ up to the symmetries of the equation, which provides the final conclusion of Theorem \ref{th:3}.

\end{document}